\documentclass[runningheads]{llncs}

\usepackage[T1]{fontenc}

\usepackage{amssymb}
\usepackage{amsfonts}
\usepackage{amsmath}
\usepackage{latexsym,epsfig}
\usepackage{graphicx} 
\usepackage[usenames,dvipsnames]{color} 
\usepackage{array} 
\usepackage{color} 
\usepackage{verbatim}
\usepackage[ruled,linesnumbered]{algorithm2e}  
\usepackage{url} 
\usepackage{enumerate} 
\usepackage{rotating}
\usepackage{hyperref} 
\usepackage{afterpage,lscape} 
\usepackage[misc,geometry]{ifsym} 
\usepackage[shortlabels]{enumitem} 
\usepackage{wasysym} 
\setlist[enumerate]{after={\bigskip}}
\setlist[itemize]{after={\bigskip}} 
\allowdisplaybreaks 
 
\newcommand{\EndProof}{\hspace{\stretch{1}} $\Box$}

\newcommand{\Z}{\mathbb{Z}}

\newcommand{\N}{\mathbb{N}}

\newcommand{\Semi}{\mathbb{S}}

\newcommand{\MREP}{\textsc{MinRep}} 
\newcommand{\MC}{\textsc{MinCost}} 
\newcommand{\GREP}{\textsc{GreedyRep}} 
\newcommand{\GRC}{\textsc{GreedyCost}}

\begin{document}

\title{Some Families of Greedy Numerical Semigroups} 

\titlerunning{Families of Greedy Numerical Semigroups}
%
\author{Arnau Messegu\'e-Buisan\inst{1}\orcidID{0000-0002-7425-7592} \\ 
Hebert P\'erez-Ros\'es\inst{1}\orcidID{0000-0002-3569-3885} } 

\authorrunning{Messegu\'e-Buisan and P\'erez-Ros\'es} 

\institute{Dept. of Mathematics \\ Universitat de Lleida, Lleida, Spain \\  
\email{\{arnau.messegue, hebert.perez\}@udl.cat}} 
\maketitle              
\begin{abstract}
The change-making problem was recently extended to sets of positive integers not containing the element $1$, and from there to numerical semigroups. A greedy numerical semigroup is defined as a numerical semigroup where the greedy representation of an element is optimal with respect to the number of summands. In this paper we identify some new families of greedy numerical semigroups. 

\keywords{Change-making problem  \and greedy algorithms \and numerical semigroups \and Fibonacci numbers }
\end{abstract}
%

\section{Greedy sets and greedy numerical semigroups}
\label{sec:intro} 

The money-changing problem, or change-making problem, is a classic problem in combinatorial optimization. In its original formulation we are given a set of integer coin denominations $S = \{ s_1, s_2, \ldots, s_t \}$, such that $s_1 = 1 < s_2 < \ldots < s_t$, and a target amount $k$, and the goal is to represent $k$ using as few coins as possible.  

In formal terms we look for a \emph{payment vector} $\textbf{a} = (a_1, \ldots, a_t)$, such that 

\begin{align} 
	& \label{eq:cond1} \textstyle a_i \in \N_0 \ \mbox{ for all } i = 1, \ldots, t \\
	& \label{eq:cond2} \textstyle \sum_{i=1}^{t} a_i s_i = k, \\
	& \label{eq:cond3} \textstyle \sum_{i=1}^{t} a_i \mbox{ is minimal,}  
\end{align} 
where $\N_0$ denotes the set of nonnegative integers. 

The condition $s_1 = 1$ ensures that all positive integers are representable as a sum of the $s_i$. Under this original formulation, the change-making problem has been extensively studied (see, for instance \cite{Ada10,Cow08,KoZaks94,Pea05,PR24a,PR25-Istambul}, among others). A recent paper \cite{PR25} investigates a generalization of the problem to the case $s_1 \geq 1$ and $\gcd(s_1, \ldots, s_t)=1$. In that case almost all integers in $\N_0$ (except a finite number) are representable. More precisely, all the integers in $\langle s_1, \ldots, s_t \rangle$, the \emph{numerical semigroup} generated by $s_1, \ldots, s_t$, are representable. Formally, the numerical semigroup  $\Semi = \langle s_1, \ldots, s_t \rangle$ is the submonoid of $\N_0$ defined as
\begin{displaymath}
    \Semi = \langle s_1, \ldots, s_t \rangle = \{ n \in \N_0 : n = \alpha_1 s_1 + \cdots + \alpha_t s_t \mbox{ for some } \alpha_1, \ldots \alpha_t \in \N_0 \}, 
\end{displaymath} 
where $\gcd(s_1, \ldots, s_t)=1$. 

The complement $\N_0 \setminus \Semi$ is finite, and its largest element is the \emph{Frobenius number} of $\Semi$, denoted here as $F(\Semi)$. In $\Semi$ we can always find a \emph{minimal set of generators}, meaning that none of its proper subsets generates $\Semi$. The minimal set of generators is unique and finite, and its cardinality is the \emph{embedding dimension} of $\Semi$. For more concepts and results about numerical semigroups see \cite{Ra05} and \cite{RoGar09}, among other sources. 

The strategy that we use for representing a given $k \in \langle s_1, \ldots, s_t \rangle$ is the \emph{greedy strategy}, which proceeds by first choosing the largest possible generator $s_j$ that does not exceed $k$ and such that $k-s_j$ is representable (i.e. $k-s_j \in \Semi$), and then applying the same algorithm to $k-s_j$. Alternatively we can divide $k$ by $s_j$ and then proceed recursively with the remainder. This method is formally described in Algorithm \ref{alg:quasigreedypayment2}. 
\\\\ 
	\begin{algorithm}[H] 
		\SetKwInOut{Input}{Input}
		\SetKwInOut{Output}{Output} 
        \SetKwComment{Comment}{/* }{ */}
		\SetKw{DownTo}{downto}
		\vspace{.2cm}
		\Input{The set of denominations $S = \{ s_1, s_2, \ldots, s_t \}$, with $1 \leq s_1 < s_2 < \ldots < s_t$, $\gcd(s_1, s_2, \ldots, s_t) = 1$, and an element $k \in \langle S \rangle$.} 
		\Output{Greedy representation vector $\textbf{a} = (a_1, a_2, \ldots, a_t)$.} 
		\vspace{.2cm}

        \If{ $k \notin \langle S \rangle$ }{ \Return{ \lq\lq $k$ is not representable\rq\rq\; } } 
        Initialize $\textbf{a}$: $\textbf{a} \gets (0, 0, \ldots, 0)$\; 
        $i \gets t$\; 
        \While{$k > 0$} 
		{ 
            Let $q$ be the largest integer such that $k = q s_i + r$ and $r \in \langle S \rangle$\; 
			$a_i \gets q$\;
			$k \gets r$\;  
            $i \gets i-1$\; 
		} 
        \Return{ $\textbf{a}$ }\; 
		\caption{GREEDY REPRESENTATION METHOD} 
		\label{alg:quasigreedypayment2}  
	\end{algorithm}
\vspace{2mm} 

Notice that Algorithm \ref{alg:quasigreedypayment2} is also valid for $k=0$: it simply does not enter the while loop and returns the null  vector. If $k \in \langle S \rangle, \ k>0$, the algorithm always moves from $k$ to a smaller element $k' \in \langle S \rangle$, eventually arriving at $k=0$. Hence the loop will always terminate without having to introduce a termination condition on the variable $i$.  

Needless to say, the greedy strategy is not necessarily optimal. I.e. given $k \in \langle s_1, \ldots, s_t \rangle$, its greedy representation vector is not always minimal among all possible representation vectors of $k$. However, there do exist some sets $\{ s_1, \ldots, s_t \}$ such that the greedy representation vector of any $k \in \langle s_1, \ldots, s_t \rangle$ is indeed minimal. Before getting to that point let us introduce some definitions and notation: 
\begin{definition}
    \label{def:quasigreedycost} 
    Let $S=\{ s_1, s_2, \ldots, s_t \}$ be a set of generators (or \lq coin denominations\rq), with $1 \leq s_1 < s_2 \ldots < s_t$ and $\gcd(s_1, s_2, \ldots, s_t) = 1$, and let $k \in \langle S \rangle, \; k > 0$. The \emph{minimal representation} of $k$ with respect to $S$, denoted $\mbox{\MREP}_S(k)$ is a \lq payment\rq \  vector, or representation vector, $\textbf{a} = (a_1, a_2, \ldots, a_t)$ that satisfies Conditions (\ref{eq:cond1}), (\ref{eq:cond2}) and (\ref{eq:cond3}) above. If $\textbf{a}$ is a minimal representation of $k$, then  $\mbox{\MC}_S(k) = \sum_{i=1}^{t} a_i$. The \emph{greedy representation} of $k$ with respect to $S$, denoted $\mbox{\GREP}_S(k)$, is the representation vector $\textbf{a} = (a_1, a_2, \ldots, a_t)$ produced by Algorithm \ref{alg:quasigreedypayment2}, and $\mbox{\GRC}_S(k) = \sum_{i=1}^{t} a_i$, corresponding to the greedy representation vector.  
\end{definition} 

In the terminology of numerical semigroups $\mbox{\MREP}_S(k)$ is also called a \emph{minimum-length factorization of $k$}. For some considerations on factorizations and their lengths see \cite{Chap18}. 

\begin{definition}
    \label{def:quasigreedyset} 
    Let $S = \{ s_1, s_2, \ldots, s_t \}$ be a set of generators with $1 \leq s_1 < s_2 < \ldots < s_t$ and $\gcd(s_1, s_2, \ldots, s_t) = 1$, such that $\mbox{\GRC}_S(k) = \mbox{\MC}_S(k)$ for \emph{any} given $k \in \langle S \rangle$. Then $S$ will be  called \emph{ greedy}, and the semigroup $\Semi = \langle S \rangle$ will also be called \emph{ greedy}.  
\end{definition} 

The paper \cite{PR25} gives an algorithm to determine when a given set $S$ is greedy, and also some conditions for semigroups of embedding dimension $3$. These conditions will be briefly reviewed in Section \ref{sec:threegens}. 

Regarding the algorithmic identification of greedy sets a key concept is that of a \emph{counterexample}: 

\begin{definition}
    \label{def:counterexample} 
    Given a set of generators $S=\{ s_1, s_2, \ldots, s_t \}$, with $1 \leq s_1 < s_2 < \ldots < s_t$ and $\gcd(s_1, s_2, \ldots, s_t) = 1$, a \emph{counterexample} for $S$ is any representable integer $k > 0$, such that $\mbox{\GRC}_S(k) > \mbox{\MC}_S(k)$. 
\end{definition} 

Obviously, $S$ is greedy if and only if it does not contain any counterexamples. Therefore, in order to determine whether $S$ is (not) greedy, we must look for counterexamples within a certain interval. The following theorem formalizes this idea:

\begin{theorem}[\cite{PR25}]
    \label{theo:counterexample} 
    As in Definition \ref{def:counterexample} let $S=\{ s_1, \ldots, s_t \}$, with $t \geq 3$, be a set of generators such that $1 \leq s_1 < s_2 < \ldots < s_t$ and $\gcd(s_1, \ldots, s_t) = 1$. Then, $S$ is greedy if, and only if, $S$ does not have any counterexample $k$ in the interval 
    \begin{equation}
        \label{eq:criticalrange0}
        s_3 + s_1 + 2 \leq k \leq F(\Semi) + s_t + s_{t-1},   
    \end{equation}    
    where $F(\Semi)$ denotes the Frobenius number of $\Semi$. 
\end{theorem}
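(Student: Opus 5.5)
The plan is to treat the two ends of the interval (\ref{eq:criticalrange0}) separately. The ``only if'' direction is trivial, so everything reduces to two claims: no $k$ with $k < s_3+s_1+2$ can be a counterexample, and if a counterexample exists, then one exists with $k \le F(\Semi)+s_t+s_{t-1}$. Throughout I use one property of Algorithm~\ref{alg:quasigreedypayment2}. If the first quotient $q$ computed for $k$ (with $i=t$) satisfies $q\ge 1$, then the run on $k-s_t$ computes quotient $q-1$ with the same remainder and continues identically. Hence $\mbox{\GRC}_S(k)=1+\mbox{\GRC}_S(k-s_t)$ whenever $k-s_t\in\Semi$.

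\emph{Lower end.} Let $k\in\Semi$ with $0<k\le s_3+s_1+1$.
\begin{itemize}
\item If $k$ is a generator, the greedy step takes $q\ge1$ for that generator with remainder $0$. The cost is $1$, which is optimal.
\item Suppose some representation of $k$ uses $s_j$ with $j\ge3$ together with at least one more summand. Then $k\ge s_3+s_1$, so $k\in\{s_3+s_1,\,s_3+s_1+1\}$ and $\mbox{\MC}_S(k)=2$ (cost $1$ is excluded by the first case). The greedy picks the largest $s_j$ with $k-s_j\in\Semi$, and $j\ge 3$ is possible. The remainder is then at most $s_1+1\le s_2$, hence a single generator. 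So the greedy cost is $2$.
\item Otherwise every representation of $k$ uses only $s_1,s_2$. The greedy takes as many copies of $s_2$ as possible, which minimizes the number of coins: trading $s_2$ copies of $s_1$ for $s_1$ copies of $s_2$ strictly lowers the count. I must also check that the greedy does not choose a larger generator here. If it did, that choice would yield a representation involving some $s_j$, $j\ge3$, which this case excludes.
\end{itemize}

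\emph{Upper end.} Suppose counterexamples exist and let $k$ be the smallest one. Assume for contradiction that $k>F(\Semi)+s_t+s_{t-1}$.
\begin{enumerate}
\item Since $k-s_t>F(\Semi)$, we have $k-s_t\in\Semi$, so $\mbox{\GRC}_S(k)=1+\mbox{\GRC}_S(k-s_t)$.
\item Suppose a minimal representation of $k$ used $s_t$. Then $\mbox{\MC}_S(k-s_t)\le \mbox{\MC}_S(k)-1<\mbox{\GRC}_S(k)-1=\mbox{\GRC}_S(k-s_t)$. So $k-s_t$ would be a smaller counterexample, which is impossible.
\item Hence every minimal representation of $k$ uses some $s_i$ with $i\le t-1$. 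By minimality of $k$, the element $k-s_i$ is not a counterexample, so $\mbox{\GRC}_S(k-s_i)=\mbox{\MC}_S(k-s_i)=\mbox{\MC}_S(k)-1$.
\item Because $k-s_i-s_t>F(\Semi)+s_{t-1}-s_i\ge F(\Semi)$, the greedy on $k-s_i$ starts with $s_t$. This gives $\mbox{\GRC}_S(k-s_i-s_t)=\mbox{\MC}_S(k)-2$, and this step is exactly where the bound $s_{t-1}$ enters.
\item Now $k-s_t=s_i+(k-s_i-s_t)$, so $\mbox{\MC}_S(k-s_t)\le \mbox{\MC}_S(k)-1$. Since $k-s_t<k$ is not a counterexample, $\mbox{\GRC}_S(k-s_t)\le\mbox{\MC}_S(k)-1$.
\item Combining with step 1 gives $\mbox{\GRC}_S(k)\le\mbox{\MC}_S(k)$. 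This contradicts $k$ being a counterexample.
\end{enumerate}

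The upper-end argument is clean once the recursion identity from the first paragraph is stated carefully. I expect the main obstacle to be the lower end. The difficulty is controlling which generator the greedy actually picks for $k\le s_3+s_1+1$, since generators $s_4,\dots$ may also lie in that range. The case analysis above, split by whether $k$ admits a representation with a generator of index at least $3$, is what I would try first.
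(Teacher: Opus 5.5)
Your upper-end argument is correct as written. It is the Kozen--Zaks exchange argument for a minimal counterexample. The recursion $\mbox{\GRC}_S(k)=1+\mbox{\GRC}_S(k-s_t)$ for $k-s_t\in\Semi$ is valid for Algorithm~\ref{alg:quasigreedypayment2}, and $s_i\le s_{t-1}$ is exactly what gives $k-s_i-s_t>F(\Semi)$. The paper gives no proof of this theorem (it is only quoted), so there is no route to compare against. The one real problem is in the second bullet of your lower end.

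There you assert that the remainder $k-s_{j'}\le s_1+1$ is a single generator, and that $\mbox{\MC}_S(k)=2$. Both silently use $s_1\ge 2$. A positive element of $\Semi$ that needs two or more summands is at least $2s_1$, and $s_1+1<2s_1$ only when $s_1\ge 2$. If $s_1=1$ and $s_2\ge 3$, the remainder $2=1+1$ costs two coins.

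This is not a repairable slip; the statement itself fails there. Take $S=\{1,3,4\}$ and $k=6=s_3+s_1+1$. The greedy representation is $4+1+1$, while $6=3+3$. Yet the interval $[s_3+s_1+2,\,F(\Semi)+s_t+s_{t-1}]=[7,\,F(\N_0)+7]$ is empty under the usual convention $F(\N_0)=-1$. Even under $F(\N_0)=0$ it would be $\{7\}$, and $7=4+3$ is not a counterexample. So the theorem as stated, which allows $s_1=1$, would declare this non-greedy set greedy.

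Your proof therefore establishes the theorem precisely under the extra hypothesis $s_1\ge 2$, and you should state it. For $s_1=1$ the correct lower end is the Kozen--Zaks bound $s_3+2$. Your own case analysis recovers it: run the second bullet with $k\le s_3+s_1$ instead of $k\le s_3+s_1+1$. Then the remainder is forced to be $0$ or $s_1$, so for every $s_1\ge 1$ no counterexample lies below $s_3+s_1+1$.
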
 
\vspace{3mm} 

Note that there is no loss of generality by considering sets of cardinality $t \geq 3$ in Theorem \ref{theo:counterexample} above, given that sets of cardinality $1$ or $2$ are always greedy, and hence there are no counterexamples for $1 \leq t \leq 2$. 

The interval $\big[s_3 + s_1 + 2; \ F(\Semi) + s_t + s_{t-1} \big]$ given in Eq. (\ref{eq:criticalrange0}) is called the \emph{critical range}. Theorem \ref{theo:counterexample} suggests a procedure for checking if a given set $S$ is greedy, namely scan the critical range for a counterexample, and if no such counterexample can be found, then we may conclude that $S$ is greedy. Unfortunately, finding counterexamples involves determining the $\MC$, which is a hard problem. The solution to that dilemma involves the concept of a \emph{witness}: 

\begin{definition}
    \label{def:witness} 
    Given a set of generators $S=\{ s_1, s_2, \ldots, s_t \}$, with $1 \leq s_1 < s_2 < \ldots < s_t$ and $\gcd(s_1, s_2, \ldots, s_t) = 1$, a \emph{witness} for $S$ is any representable integer $k > 0$, such that $\mbox{\GRC}_S(k) > \mbox{\GRC}_S(k-s_i)+1$ for some generator $s_i < k$.  
\end{definition}

It turns out that the smallest counterexample is also a witness, and witnesses are easier to find. Now, with the aid of witnesses it becomes feasible to identify greedy sets: 
 
\begin{theorem}[\cite{PR25}] 
    \label{theo:witness}
    As in Definition \ref{def:witness} let $S=\{ s_1, \ldots, s_t \}$, with $t \geq 3$ be a set of generators such that $1 \leq s_1 < s_2 < \ldots < s_t$ and $\gcd(s_1, \ldots, s_t) = 1$. Then, $S$ is greedy if, and only if, $S$ does not have any witness $k$ in the interval 
    \begin{equation}
        \label{eq:criticalrange}
        s_3 + s_1 + 2 \leq k \leq F(\Semi) + s_t + s_{t-1},   
    \end{equation}    
    where $F(\Semi)$ denotes the Frobenius number of $\Semi$. 
\end{theorem}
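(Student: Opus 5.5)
The plan is to reduce the statement to Theorem \ref{theo:counterexample} via two implications: a witness certifies non-greediness, and a smallest counterexample in the critical range is itself a witness.

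\emph{A witness forces a counterexample.} Suppose $k$ is a witness in the interval (\ref{eq:criticalrange}), with $\mbox{\GRC}_S(k) > \mbox{\GRC}_S(k-s_i)+1$ for some $s_i<k$. By definition of a witness, $k-s_i$ is representable. Append $s_i$ to the greedy representation of $k-s_i$. This gives a representation of $k$ with $\mbox{\GRC}_S(k-s_i)+1$ summands. Hence
\[
\mbox{\MC}_S(k) \leq \mbox{\GRC}_S(k-s_i)+1 < \mbox{\GRC}_S(k),
\]
so $k$ is a counterexample and $S$ is not greedy. This direction needs no appeal to the interval at all.

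\emph{No witness in range implies no counterexample in range.} Assume $S$ is not greedy. By Theorem \ref{theo:counterexample} there is a counterexample in the critical range; let $k$ be the smallest counterexample overall. One point must be checked: this $k$ lies in the critical range. This holds because the proof of Theorem \ref{theo:counterexample} shows there are no counterexamples below $s_3+s_1+2$. Since some counterexample lies in the range, the minimum is also at most $F(\Semi)+s_t+s_{t-1}$.

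Now take a minimal representation $\textbf{a}$ of $k$. Choose any $i$ with $a_i>0$; since $k$ is a counterexample it is not itself a generator, so $s_i<k$. Removing one copy of $s_i$ gives a representation of $k-s_i$ with $\mbox{\MC}_S(k)-1$ summands, so $\mbox{\MC}_S(k-s_i)\le \mbox{\MC}_S(k)-1$. Because $k-s_i<k$ and $k$ is the smallest counterexample, $\mbox{\GRC}_S(k-s_i)=\mbox{\MC}_S(k-s_i)$. Therefore
\[
\mbox{\GRC}_S(k-s_i)+1 \le \mbox{\MC}_S(k) < \mbox{\GRC}_S(k),
\]
so $k$ is a witness inside the range. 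This contradicts the assumption.

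\emph{Main obstacle.} The only delicate point is confirming that the least counterexample really sits in the interval (\ref{eq:criticalrange}), i.e.\ that it exceeds $s_3+s_1+1$. I would take this from the lower-bound argument of Theorem \ref{theo:counterexample}. If that theorem is used only as a black box, I would instead argue as follows. For $k\le s_3+s_1+1$, every representation has a short and rigid form: either only $s_1,s_2$ are used, or a single larger generator plus at most a small remainder appears. So greedy is optimal by a direct case check in that region.
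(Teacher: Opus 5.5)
Your two implications are the right ones and follow the route the paper indicates (``the smallest counterexample is also a witness''). A witness $k$ gives $\MC_S(k)\le\GRC_S(k-s_i)+1<\GRC_S(k)$. For the least counterexample $k$ you correctly obtain $\GRC_S(k-s_i)+1\le\MC_S(k)<\GRC_S(k)$. The gap is the step you flagged as the main obstacle, and it is not a formality. Theorem~\ref{theo:counterexample} only says that a non-greedy $S$ has \emph{some} counterexample in the range; it does not exclude counterexamples below $s_3+s_1+2$. You cannot extract that from its proof either, because the claim is false in part of the stated generality. Your fallback ``direct case check'' is asserted rather than carried out, and it is exactly where this shows up.

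Carried out, the check works when $s_1\ge 2$. Take $k\le s_3+s_1+1$. Since $s_3\ge s_1+2$, no representation of $k$ contains two generators $\ge s_3$. If some $s_j\ge s_3$ has $k-s_j\in\Semi$, the greedy algorithm takes the largest such $s_j$ once. The remainder satisfies $k-s_j\le s_1+1<2s_1$, so it must be $0$, $s_1$, or $s_2=s_1+1$. Hence $\GRC_S(k)\le 2$, and $\GRC_S(k)=1$ when $k$ is a generator. Otherwise no representation of $k$ uses a generator $\ge s_3$, and greedy maximizes $a_2$, which minimizes $a_1+a_2=(k-a_2s_2)/s_1+a_2$.

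For $s_1=1$, however, the remainder can be $2=1+1$ and the claim fails. Take $S=\{1,3,4\}$ and $k=6=s_3+s_1+1$: the greedy representation is $4+1+1$, whereas $3+3$ uses two coins. For this $S$ the interval (\ref{eq:criticalrange}) is $[7,6]$ (with the usual convention $F(\N_0)=-1$), so the statement itself is false when $s_1=1$. Theorem~\ref{theo:counterexample} has the same defect. Your proof is complete once you add the hypothesis $s_1\ge 2$ and include the case check above. For $s_1=1$ the correct left endpoint is $s_3+2$ (the Kozen--Zaks bound), and the same check goes through there since the remainder is then at most $1$.
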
 
\vspace{3mm} 

Theorem \ref{theo:witness} finally provides a practical algorithm for checking if a given set of generators $S$ is greedy. The algorithm checks whether $\mbox{\GRC}_S(k) \leq \mbox{\GRC}_S(k-s_i)+1$ for all $k$ in the critical range, and all generators $s_i < k$. These ideas are formalized in Algorithm \ref{alg:search-witness}. 


	\begin{algorithm}[ht] 
		\SetKwInOut{Input}{Input}
		\SetKwInOut{Output}{Output}
		\SetKw{DownTo}{downto}
        \SetKw{To}{to} 
		\vspace{.2cm}
		\Input{The set of denominations $S = \{ s_1, s_2, \ldots, s_t \}$, with $t \geq 3$, $1 \leq s_1 < s_2 < \ldots < s_t$, $\gcd(s_1, s_2, \ldots, s_t) = 1$.}  
		\Output{TRUE if $\langle S \rangle$ is greedy, and  FALSE otherwise.} 
		\vspace{.2cm}

        $\Semi \gets \langle S \rangle$\;  
		\For{ $k \gets s_3 + s_1 + 2$ \To $F(\Semi) + s_t + s_{t-1}$} 
		{ 
			 $t' \gets$ Largest index $j$ such that $s_{j} \leq k$\;  
                \For{ $i \gets 1$ \To $t'-1$}  
                { 
                    \If{ $\mbox{\GRC}_S(k) > \mbox{\GRC}_S(k-s_i)+1$ }{ \Return{ FALSE }\; } 
                }
                
		} 
        \Return{ TRUE }\;
		\caption{DETERMINE WHETHER A SEMIGROUP DEFINED BY A GIVEN SET OF GENERATORS IS GREEDY}
		\label{alg:search-witness}  
	\end{algorithm}



Note that Algorithm \ref{alg:search-witness} implicitly requires the Frobenius number of $\Semi$ as an input. If we don't know the Frobenius number of $\Semi$, then it can be replaced by any upper bound (see, for instance, Section 2.3 of \cite{Ra05}, which mentions several such bounds).  

\section{Greedy semigroups of embedding dimension three} 
\label{sec:threegens} 

Now we will focus our attention on the special case of three generators. The paper \cite{PR25} gives a necessary condition:  

\begin{lemma}[\cite{PR25}] 
    \label{lemma:threegens} 
    Let $S = \{ s_1, s_2, s_3 \}$, with $1 < s_1 < s_2 < s_3$, and $\gcd(s_1, s_2, s_3) = 1$, so that $\Semi = \langle S \rangle$ is not greedy. Then, the smallest counterexample $k$ (and hence the smallest witness) has the form $k = s_2 y$, and it is a  solution of the Diophantine equation 
    \begin{equation}
        s_1 x + s_3 z = s_2 y, 
    \end{equation}
    where $x, y, z$ are positive integers, such that $y < x+z$. 
\end{lemma}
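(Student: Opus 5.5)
The plan is to start from the smallest counterexample $k$ and compare a minimal representation $\textbf{m}=(x',y',z')$ of $k$ with its greedy representation $\textbf{g}=(a_1,a_2,a_3)$, where $a_1+a_2+a_3 > x'+y'+z'$. Minimality of $k$ gives $\mbox{\GRC}_S(j)=\mbox{\MC}_S(j)$ for every representable $0<j<k$. We will use this repeatedly together with one basic property of Algorithm \ref{alg:quasigreedypayment2}. If the greedy algorithm on $k$ starts with $s_i$, i.e. $q\ge 1$ copies of the largest admissible generator, then the greedy run on $k-s_i$ starts with $q-1$ copies of $s_i$ and continues identically. Hence $\mbox{\GRC}_S(k)=1+\mbox{\GRC}_S(k-s_i)$.

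\emph{Step 1: $z'=0$ and $a_3\ge 1$.} Suppose $z'\ge1$. Then $k-s_3\in\Semi$, so greedy starts with $s_3$, and $\mbox{\GRC}_S(k)=1+\mbox{\GRC}_S(k-s_3)=1+\mbox{\MC}_S(k-s_3)\le 1+(\mbox{\MC}_S(k)-1)$. This contradicts $k$ being a counterexample, so $z'=0$. Next suppose $k-s_3\notin\Semi$. Then every representation of $k$ uses only $s_1,s_2$, and greedy takes the maximal possible number of copies of $s_2$. Since any two such representations differ by trading $s_1$'s for the same value in $s_2$'s ($s_2>s_1$), maximizing the $s_2$-count minimizes the number of summands. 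So greedy would be optimal, again a contradiction. Therefore $a_3\ge1$, so $\textbf{g}$ uses $s_3$ while $\textbf{m}$ does not.

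\emph{Step 2: reduce to the shape $s_1x+s_3z=s_2y$.} We have $a_1s_1+a_2s_2+a_3s_3=x's_1+y's_2$. The target is to show $x'=0$ and $a_2=0$, giving $k=s_2y$ with $y=y'$, $x=a_1$, $z=a_3$ and $y<x+z$. For $x'=0$: if $x'\ge1$, then $k-s_1\in\Semi$ is not a counterexample. I would then show $\mbox{\GRC}_S(k)\le 1+\mbox{\GRC}_S(k-s_1)$, which yields a contradiction as in Step 1. This uses that $k$ is a witness only through generators not dominating the greedy choice. Here I would use that greedy on $k$ takes $s_3$, and compare with greedy on $k-s_1$: either $k-s_1-s_3\in\Semi$, and then greedy on both starts with $s_3$ and we induct on $k-s_3$ (smaller, not a counterexample), or it is not, in which case $k-s_1$ is represented by $s_1,s_2$ only and one compares directly. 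The argument for $a_2=0$ is analogous, using $s_2$ in place of $s_1$. Alternatively, common terms can be cancelled, but then one must check that the reduced vector is still greedy.

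The main obstacle is Step 2, specifically proving $\mbox{\GRC}_S(k)\le 1+\mbox{\GRC}_S(k-s_i)$ for $i=1,2$ when $\textbf{m}$ contains $s_i$. The greedy run on $k$ and on $k-s_i$ can diverge at the first step, since $k-s_3\in\Semi$ need not imply $k-s_i-s_3\in\Semi$. I would first try to handle this with the case split above plus induction on $k-s_3$, invoking the minimality of $k$ at each stage. If that fails, I would fall back on the fact quoted before Theorem \ref{theo:witness} that the smallest counterexample is a witness, and analyse which generator $s_i$ realizes the witness inequality. Once $x'=a_2=0$ is established, positivity of $x,z$ and the inequality $y<x+z$ follow from $a_3\ge1$ and the counterexample inequality. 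Positivity of $x$ in particular holds because $s_3z=s_2y$ with $x=0$ would make greedy on $k$ use only $s_3$'s, which is fewer coins than $y$ copies of $s_2$.
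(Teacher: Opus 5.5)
Your Step 1 and your closing paragraph are correct. Step 2, however, is the real content of the lemma, and it is only sketched; moreover, the target you set there is false for $i=2$. You propose to prove $\mbox{\GRC}_S(k)\le 1+\mbox{\GRC}_S(k-s_i)$ whenever $\textbf{m}$ contains $s_i$. Once $x'=0$, the minimal representation is $(0,y',0)$, so it necessarily contains $s_2$. Your inequality for $i=2$, combined with $\mbox{\GRC}_S(k-s_2)=\mbox{\MC}_S(k-s_2)=\mbox{\MC}_S(k)-1$, would then show that no counterexample exists at all. In fact the smallest counterexample is a witness precisely through $s_2$. For $S=\{3,5,8\}$ and $k=20$ one has $\mbox{\GRC}_S(20)=5>\mbox{\GRC}_S(15)+1=4$, and this is exactly your Case B, since $20-5-8=7\notin\Semi$. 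So ``one compares directly'' cannot deliver the inequality in Case B, and for $i=1$ that case is likewise left unproved. The witness fallback only restates that $\mbox{\GRC}_S(k)>\mbox{\GRC}_S(k-s_i)+1$ for every $s_i$ used by $\textbf{m}$; it does not locate which generators can occur.

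The missing point is that the hypothesis to exploit is that $s_i$ occurs in \emph{both} $\textbf{g}$ and $\textbf{m}$; under it, Case B disappears. If $a_i\ge 1$ and $a_3\ge 1$, removing one $s_i$ and one $s_3$ from $\textbf{g}$ gives a representation of $k-s_i-s_3$, so $k-s_i-s_3\in\Semi$. In Case A your computation works: $\mbox{\GRC}_S(k)=1+\mbox{\MC}_S(k-s_3)\le 2+\mbox{\MC}_S(k-s_3-s_i)=1+\mbox{\GRC}_S(k-s_i)=\mbox{\MC}_S(k)$, a contradiction. An equivalent way to see this: the output of Algorithm \ref{alg:quasigreedypayment2} is the lexicographically largest representation (compared from $a_3$ downwards), so every sub-vector of $\textbf{g}$ is again greedy, and removing a shared generator yields a smaller counterexample. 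Hence $\textbf{g}$ and $\textbf{m}$ have disjoint supports, and Step 2 should be organised around this fact rather than around proving $x'=0$ first. Take $z'=0$ and $a_3\ge1$ from Step 1. If $\textbf{m}=(x',0,0)$, then $a_1=0$ and $a_2+a_3>x'$, so $k=x's_1<(a_2+a_3)s_1\le k$, which is impossible. If $x',y'\ge 1$, then $\textbf{g}=(0,0,a_3)$, and $a_3s_3=x's_1+y's_2<(x'+y')s_3$ gives $a_3<x'+y'$, contradicting $\mbox{\GRC}_S(k)>\mbox{\MC}_S(k)$. What remains is $\textbf{m}=(0,y,0)$ and $\textbf{g}=(x,0,z)$, and your last paragraph finishes the proof.
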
  
\vspace{3mm}

With the aid of this lemma it was proved that sets of consecutive positive integers are greedy: 

\begin{theorem}[\cite{PR25}] 
    \label{theo:threeconsecutivegens} 
    Let  $\Semi = \langle n, n+1, n+2 \rangle$, with $n \geq 2$. Then, $\Semi$ is greedy. 
\end{theorem}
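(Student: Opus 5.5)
The plan is to argue by contradiction using Lemma \ref{lemma:threegens}. Suppose $\Semi = \langle n, n+1, n+2 \rangle$ is not greedy. Then the smallest counterexample has the form $k = (n+1)y$ and satisfies
\begin{displaymath}
n x + (n+2) z = (n+1) y, \qquad x,y,z \geq 1, \quad y < x+z.
\end{displaymath}
Rewriting the equation as $n(x+z) + 2z = n y + y$ gives $n(x+z-y) = y - 2z$. Since $y < x+z$, the left side is a positive multiple of $n$. Hence $y - 2z \geq n$, and in particular $y \geq n + 2z \geq n+2$.

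Next I compute the greedy representation of $k = (n+1)y$ and show it already attains the minimal cost, contradicting that $k$ is a counterexample. Write $k = (n+2)q + r$ with $0 \le r \le n+1$. In fact $k = (n+2)y - y$. For $y \geq n+2$, the greedy algorithm takes as many copies of $n+2$ as possible, provided the remainder is representable by $n$ and $n+1$. Elements of $\langle n,n+1\rangle$ that are at most $n+1$ are only $0, n, n+1$. So the greedy either hits remainder $0$, $n$ or $n+1$ at the maximal quotient, or backs off one or two copies, giving remainders up to about $3n+5$, which are then paid with $n, n+1$ greedily.

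To finish, I use a lower bound on the cost. Any representation of $k$ uses at least $\lceil k/(n+2)\rceil$ coins, so $\MC_S(k)\ge \lceil k/(n+2)\rceil$. I then check that $\GRC_S(k)$ equals this bound. Setting $q_0=\lfloor k/(n+2)\rfloor$ and $r_0 = k - (n+2)q_0$, I split into cases:
\begin{itemize}
\item If $r_0=0$, greedy costs $q_0$.
\item If $r_0\in\{n,n+1\}$, greedy costs $q_0+1$.
\item If $1\le r_0\le n-1$, greedy takes $q_0-1$ copies of $n+2$ and leaves $r_0+n+2 \in [n+3, 2n+1]$. This needs to be a sum of two elements from $\{n,n+1\}$, i.e. to lie in $[2n,2n+2]$, which holds only for $r_0\ge n-2$.
\end{itemize}

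The main obstacle is the case of small nonzero $r_0$, where the greedy must back off several copies. In general, backing off $j$ copies leaves $r_0+j(n+2)$. This is representable with exactly $j+1$ coins when $r_0+j(n+2)\in[(j+1)n,(j+1)(n+1)]$, i.e. when $j \ge n - r_0 - 1$ roughly, namely when $n - r_0 \le j+1 \le$ something. So greedy costs $q_0+1$ as long as $q_0\ge j$. That is where $y\ge n+2$ (so $q_0 \gtrsim n$) is used. I would verify this inequality carefully: $q_0 \ge n-1$ suffices, and $k\ge (n+1)(n+2)$ gives $q_0\ge n+1$. Since the greedy picks the largest feasible quotient, the resulting cost is at most $q_0+1=\lceil k/(n+2)\rceil$. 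This equals the lower bound, contradicting that $k$ is a counterexample, so $\Semi$ is greedy.
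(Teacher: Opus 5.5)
Your reduction to $y\ge n+2$ via Lemma~\ref{lemma:threegens} is correct, and so is the lower bound $\MC_S(k)\ge\lceil k/(n+2)\rceil$. The decisive last step, however, ``since the greedy picks the largest feasible quotient, the resulting cost is at most $q_0+1$'', does not follow from what you showed. You exhibit \emph{some} back-off $j$ that yields $q_0+1$ coins. The greedy may stop at a smaller back-off, and a priori taking more copies of $n+2$ could leave an expensive remainder. That is precisely how greedy fails in general, so it has to be ruled out.

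The fix is short. A number $m$ is a sum of $c$ elements of $\{n,n+1\}$ iff $cn\le m\le c(n+1)$. Take $r_0\ge 1$ and write $m=r_0+j(n+2)=j(n+1)+(r_0+j)$. Then $m\notin\langle n,n+1\rangle$ whenever $2j<n-r_0$, while for $\lceil (n-r_0)/2\rceil\le j\le n+1-r_0$ the remainder is a sum of $j+1$ coins. So the greedy backs off exactly $j^*=\max\{0,\lceil (n-r_0)/2\rceil\}$ copies. This is not ``one or two'', and your threshold $j\ge n-r_0-1$ is wrong, e.g.\ at $r_0=n-1$. One has $r_0+j^*\le n+1$, and the two-generator greedy on the remainder is optimal, so it uses $\lceil m/(n+1)\rceil=j^*+1$ coins. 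Hence $\GRC_S(k)=q_0+1=\lceil k/(n+2)\rceil$.

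With this repair your proof is correct, but it is not the route the paper points to. The paper does not reprove the theorem; it only says it was obtained with the aid of Lemma~\ref{lemma:threegens}. Within that framework the natural finish from $y\ge n+2$ is a size comparison:
\begin{itemize}
\item every integer in $[cn,c(n+2)]$ is a sum of $c$ generators, and these intervals cover all integers from $n\lfloor n/2\rfloor$ on, so $F(\Semi)\le n\lfloor n/2\rfloor-1$;
\item hence the critical range ends at or before $n\lfloor n/2\rfloor+2n+2<(n+1)(n+2)\le k$, contradicting Theorem~\ref{theo:counterexample}.
\end{itemize}
That needs no analysis of the greedy algorithm at all, but it leans on the cited machinery.

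Your route computes the greedy explicitly and, once repaired, gives more than needed. For any $k\in\Semi$ some back-off $j\le q_0$ is feasible, so $j^*\le q_0$ automatically. Therefore $\GRC_S(k)=\lceil k/(n+2)\rceil=\MC_S(k)$ for \emph{every} $k\in\Semi$. This makes Lemma~\ref{lemma:threegens} (and the bound $q_0\ge n+1$) superfluous in your argument, which becomes an elementary, self-contained proof exploiting that the generators are consecutive.
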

\vspace{3mm} 



We now want to expand the number of greedy families of cardinality three. We start with the following lemma:  

\begin{lemma}
\label{lemm:3weights}
Let $s_1,s_2,s_3$ and $a_1,a_2,a_3$ be positive integers with $s_1<s_2<s_3$. Suppose that $k=a_1s_1+a_2s_2+a_3s_3$, where $\displaystyle a_3 = \Bigl \lfloor \frac{k}{s_3} \Bigr \rfloor$, $\displaystyle a_2 = \Bigl \lfloor \frac{k-a_3s_3}{s_2} \Bigr \rfloor$ and $\displaystyle a_1 = \frac{k-a_2s_2-a_3s_3}{s_1}$. Then $$s_3 > s_1a_1+s_2a_2$$ and
$$s_2 > s_1a_1$$
\end{lemma}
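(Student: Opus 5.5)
The plan is to read both inequalities directly off the floor definitions, using the division-with-remainder characterisation $\lfloor m/d \rfloor = q \iff qd \le m < (q+1)d$ for nonnegative integers $m$ and positive $d$. No earlier result of the paper is needed; the lemma is purely arithmetic.

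First, from $a_3 = \lfloor k/s_3 \rfloor$ we get $a_3 s_3 \le k < (a_3+1)s_3$. Subtracting $a_3 s_3$ gives $0 \le k - a_3 s_3 < s_3$. Since $k = a_1s_1 + a_2s_2 + a_3s_3$, the quantity $k - a_3s_3$ equals exactly $s_1a_1 + s_2a_2$. This yields the first inequality $s_3 > s_1a_1 + s_2a_2$.

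Second, set $r = k - a_3s_3 = s_1a_1 + s_2a_2 \ge 0$. From $a_2 = \lfloor r/s_2 \rfloor$ we get $a_2 s_2 \le r < (a_2+1)s_2$. Hence $r - a_2s_2 = s_1 a_1$ lies in $[0, s_2)$, which gives $s_2 > s_1a_1$. The definition of $a_1$ as $(k-a_2s_2-a_3s_3)/s_1$ is used only to identify $r - a_2 s_2$ with $s_1 a_1$. The positivity of the $a_i$ and the ordering $s_1 < s_2 < s_3$ are not needed for these two inequalities, beyond ensuring the floors are taken of nonnegative quantities.

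There is no real obstacle. The only point needing care is to note that the hypotheses make the floor values \emph{equal} to the given $a_i$. This equality, rather than any greedy-representability condition as in Algorithm~\ref{alg:quasigreedypayment2}, is what licenses the strict upper bound $r < (a_2+1)s_2$ and the analogous bound for $a_3$.
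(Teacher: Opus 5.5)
Your proof is correct and is essentially the paper's argument. The paper writes $a_3 = \lfloor a_3 + (s_1a_1+s_2a_2)/s_3 \rfloor$ and concludes that the added fraction must be less than $1$, and does the same for $a_2$; that is exactly your remainder bound $0 \le k - a_3s_3 < s_3$ obtained from the division-with-remainder characterisation of the floor.
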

\textit{Proof.}
Notice the following:
$$a_3 = \Bigl \lfloor \frac{k}{s_3} \Bigr \rfloor = \Bigl \lfloor \frac{a_3s_3+a_2s_2+a_1s_1}{s_3} \Bigr \rfloor = \Bigl \lfloor a_3+\frac{a_2s_2+a_1s_1}{s_3} \Bigr \rfloor$$
Then it must hold:
$$\frac{a_2s_2+a_1s_1}{s_3} < 1 \Longleftrightarrow a_2s_2+a_1s_1<s_3$$
Similarly:
$$a_2 = \Bigl \lfloor \frac{k-a_3s_3}{s_2} \Bigr \rfloor = \Bigl \lfloor \frac{a_2s_2+a_1s_1}{s_2} \Bigr \rfloor = \Bigl \lfloor a_2+\frac{a_1s_1}{s_2} \Bigr \rfloor$$
Then it must hold:
$$\frac{a_1s_1}{s_2} < 1 \Longleftrightarrow a_1s_1<s_2$$
\EndProof

\begin{proposition}
\label{prop:3weights}
Suppose that $\Delta_1,\Delta_2,\Delta_3$ are integers and $s_1,s_2,s_3, a_1,a_2,a_3$ are positive integers with $s_1<s_2<s_3$. Furthermore, suppose that: 
\begin{align}
&\Delta_1s_1+\Delta_2s_2+\Delta_3s_3 = 0 \\
& \Delta_j \geq -a_j \\
&s_3>s_1a_1+s_2a_2 \\
&s_2>s_1a_1\\
&\gcd(s_1,s_2)=1 
\end{align}
And,
\begin{align}s_3 \geq \frac{s_2^2}{s_1}
\end{align}
Then $\Delta_1+\Delta_2+\Delta_3 \geq 0$.
\end{proposition}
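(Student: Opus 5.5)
The plan is a case analysis on the sign of $\Delta_3$. In each case I rewrite the relation $\Delta_1s_1+\Delta_2s_2+\Delta_3s_3=0$ to isolate the terms of one sign. Then I use the lower bounds $\Delta_j\ge -a_j$ together with the two inequalities $s_3>s_1a_1+s_2a_2$ and $s_2>s_1a_1$, which are exactly what Lemma~\ref{lemm:3weights} provides in the intended application.

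\emph{Case $\Delta_3>0$.} This case should be impossible. Since $\Delta_1\ge -a_1$ and $\Delta_2\ge -a_2$, we get $\Delta_1s_1+\Delta_2s_2\ge -(a_1s_1+a_2s_2)>-s_3$. On the other hand $\Delta_1s_1+\Delta_2s_2=-\Delta_3s_3\le -s_3$, a contradiction.

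\emph{Case $\Delta_3=0$.} Here $\Delta_1s_1=-\Delta_2s_2$, and $\gcd(s_1,s_2)=1$ gives $\Delta_1=ms_2$ and $\Delta_2=-ms_1$ for some integer $m$. Hence $\Delta_1+\Delta_2+\Delta_3=m(s_2-s_1)$, so it suffices to show $m\ge 0$. If $m<0$, then $\Delta_1\le -s_2$. But $a_1s_1<s_2$ gives $a_1<s_2$, so $\Delta_1<-a_1$, contradicting the hypothesis $\Delta_1\ge -a_1$.

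\emph{Case $\Delta_3<0$.} Write $d=-\Delta_3\ge 1$, so that $\Delta_1s_1+\Delta_2s_2=ds_3$. There are two subcases according to the sign of $\Delta_1$.
\begin{itemize}
\item If $\Delta_1\ge 0$, then $\Delta_1\ge \Delta_1s_1/s_2$. Hence $\Delta_1+\Delta_2\ge (\Delta_1s_1+\Delta_2s_2)/s_2=ds_3/s_2$. Using $s_3\ge s_2^2/s_1$, this is at least $ds_2/s_1>d$, and so $\Delta_1+\Delta_2+\Delta_3>0$.
\item If $\Delta_1<0$, put $e=-\Delta_1$, so $1\le e\le a_1$. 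Then $\Delta_2s_2=ds_3+es_1$, and the goal $\Delta_2\ge d+e$ is equivalent to $ds_3+es_1\ge (d+e)s_2$, that is, $d(s_3-s_2)\ge e(s_2-s_1)$. From $s_3\ge s_2^2/s_1$ we get $s_3-s_2\ge s_2(s_2-s_1)/s_1$. Since $s_2/s_1>a_1\ge e$ and $d\ge 1$, it follows that $d(s_3-s_2)>e(s_2-s_1)$, which gives the claim.
\end{itemize}

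The only delicate point is the subcase $\Delta_3<0$, $\Delta_1<0$. A naive bound such as $\Delta_2\ge ds_3/s_2$ loses too much, because $s_2/s_1-1$ need not exceed $a_1$. The fix is to keep the term $es_1$ in $\Delta_2 s_2$ and compare the differences $s_3-s_2$ and $s_2-s_1$. This is exactly where the hypothesis $s_3\ge s_2^2/s_1$ is used in its sharp form.
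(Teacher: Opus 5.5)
Your proof is correct and follows essentially the same route as the paper. You rule out $\Delta_3>0$ using $s_3>s_1a_1+s_2a_2$, use coprimality when $\Delta_3=0$, and for $\Delta_3<0$ reduce to $s_3-s_2\ge \frac{s_2}{s_1}(s_2-s_1)>a_1(s_2-s_1)$. The only difference is that you split the case $\Delta_3<0$ by the sign of $\Delta_1$, whereas the paper handles both signs in a single chain of inequalities via $\Delta_1\ge -a_1$, so your subcase $\Delta_1\ge 0$ is already covered by that bound.
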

\textit{Proof.} 
From (7), (8) and (9) we have:
$$-\Delta_3 = \frac{\Delta_1s_1+\Delta_2s_2}{s_3} \geq -\frac{a_1s_1+a_2s_2}{s_3} > - \frac{s_3}{s_3} = -1$$
Therefore, $\Delta_3 \leq 0$. 
Now we distinguish two cases:
\begin{enumerate}
\item $\Delta_3 = 0$. In this case $\Delta_1s_1+\Delta_2s_2 = 0$ so that since $s_1,s_2$ are coprime by hypothesis then we must have $\Delta_1=s_2t$ and $\Delta_2=-s_1t$ with $t \in \mathbb{Z}$. Here:

\begin{enumerate}
\item If $t=0$ then $\Delta_1+\Delta_2+\Delta_3 = 0$

\item If $t > 0$ then $\Delta_1+\Delta_2+\Delta_3 \geq s_2-s_1 > 0$

\item If $t < 0$ then: 
$$-a_1 \leq \Delta_1 = s_2t \leq -s_2 < -s_1a_1 < a_1$$

\end{enumerate}

\item $\Delta_3 < 0$. Then $\Delta_1s_1+\Delta_2s_2 =-\Delta_3s_3\geq s_3$. From here one gets:
$$\Delta_2s_2 \geq s_3-\Delta_1s_1$$
Therefore:
\begin{align*}
\Delta_1+\Delta_2+\Delta_3 =& \Delta_1+\Delta_2- \frac{s_1\Delta_1+s_2\Delta_2}{s_3} = \\
=&\frac{\Delta_1(s_3-s_1)+\Delta_2(s_3-s_2)}{s_3} \\
\geq& (s_3-s_2)\left( \frac{s_3}{s_2}-\Delta_1\frac{s_1}{s_2}\right)+\Delta_1(s_3-s_1) \\
=& \frac{(s_3-s_2)s_3}{s_2}+\Delta_1\frac{s_3}{s_2}(-s_1+s_2)\\
=& \frac{s_3}{s_2}(s_3-s_2+\Delta_1(s_2-s_1))  \\
\geq& \frac{s_3}{s_2}((s_3-s_2)-a_1(s_2-s_1)) \\
\geq&\frac{s_3}{s_2}\left((s_3-s_2)-\frac{s_2}{s_1}(s_2-s_1)\right) \\
=& \frac{s_3}{s_2}\left(\frac{s_3s_1-s_2^2}{s_1}\right) \ \geq 0  
\end{align*}
\end{enumerate} 
\EndProof

\begin{theorem} 
\label{theo:three-gens1} 
If $S = \left\{s_1,s_2,s_3\right\}$ is a set of positive integers with $s_1<s_2<s_3$ such that $\gcd(s_1,s_2)=1$ and $\displaystyle s_3 \geq \frac{s_2^2}{s_1}$, then $S$ is greedy.
\end{theorem}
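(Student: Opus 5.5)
The plan is to compare the greedy representation of an arbitrary $k \in \langle S \rangle$ with any other representation of $k$, and to show via Proposition~\ref{prop:3weights} that the other one has at least as many summands. Let $\textbf{a} = (a_1,a_2,a_3) = \mbox{\GREP}_S(k)$ and let $\textbf{b}=(b_1,b_2,b_3)$ be any representation vector of $k$, for instance $\textbf{b} = \mbox{\MREP}_S(k)$. Put $\Delta_j = b_j - a_j$. Then $\Delta_1 s_1 + \Delta_2 s_2 + \Delta_3 s_3 = 0$ and $\Delta_j \geq -a_j$, since $b_j \geq 0$. The hypotheses $\gcd(s_1,s_2)=1$ and $s_3 \geq s_2^2/s_1$ are exactly the last two conditions of Proposition~\ref{prop:3weights}. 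So once the two inequalities $s_3 > a_1s_1+a_2s_2$ and $s_2 > a_1 s_1$ are verified for $\textbf{a}$, the proposition gives $\sum b_j - \sum a_j = \Delta_1+\Delta_2+\Delta_3 \geq 0$. Hence $\mbox{\GRC}_S(k) = \mbox{\MC}_S(k)$ for every $k$, and $S$ is greedy.

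Two technicalities need dealing with first. Proposition~\ref{prop:3weights} and Lemma~\ref{lemm:3weights} are stated for positive $a_j$. The proof of Proposition~\ref{prop:3weights} only uses $a_j \geq 0$, so I would note that it goes through for $a_j \in \N_0$ (for $a_1 = 0$ the subcase $t<0$ is immediately contradictory). The real issue is that Algorithm~\ref{alg:quasigreedypayment2} does not simply take floors. It takes the largest $q$ whose remainder lies in $\langle S\rangle$, so Lemma~\ref{lemm:3weights} cannot be applied blindly.

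To avoid analysing every $k$, I would restrict attention to the smallest counterexample, arguing by contradiction. By Lemma~\ref{lemma:threegens}, if $S$ is not greedy then the smallest counterexample is $k = s_2 y = s_1 x + s_3 z$ with $y < x+z$. The aim is then to show that for this $k$ the greedy vector does coincide with the floor-division vector, or at least satisfies the two inequalities of Lemma~\ref{lemm:3weights}. The idea is that any greedy vector can be normalised inside $\langle s_1,s_2\rangle$ without increasing its length. Since $s_1<s_2$, trading $s_2$ copies of $s_1$ for $s_1$ copies of $s_2$ strictly shortens the representation. Likewise, a block of value at least $s_3$ made of $s_1$'s and $s_2$'s can be replaced using $s_3 \geq s_2^2/s_1$.

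A cleaner alternative is to take $\textbf{a}$ in Proposition~\ref{prop:3weights} to be the lexicographically largest representation (maximising $a_3$, then $a_2$), which is what the greedy algorithm produces. From that maximality I would try to derive the inequalities directly. If $a_1s_1+a_2s_2 \geq s_3$, then the remainder after subtracting $s_3$ once more should still lie in $\langle S\rangle$, contradicting the maximality of $a_3$. Similarly for $a_1 s_1 \geq s_2$ and the maximality of $a_2$.

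The main obstacle is precisely this last step. Membership of $a_1s_1+a_2s_2-s_3$ in $\langle S\rangle$ is not automatic, and for general semigroups $a_1 s_1 \geq s_2$ does not force $a_1s_1 - s_2 \in \langle s_1 \rangle$. I expect the key fact to be that after greedy normalisation one has $a_1 < s_2/s_1$, or at least $a_1 \leq s_2/s_1$, which is the only bound on $a_1$ actually used in the final chain of inequalities of Proposition~\ref{prop:3weights}. If this cannot be obtained for all $k$, I would fall back on the minimal counterexample, where minimality together with Lemma~\ref{lemma:threegens} should give the required control over $a_1$ and $a_2$.
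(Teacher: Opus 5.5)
Your suspicion about the greedy-versus-floor issue is correct, and the step you flag as the main obstacle cannot be carried out. Neither of your fallbacks can rescue it, because the statement itself is false.

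\textbf{A counterexample.} Take $S=\{5,7,12\}$. Then $\gcd(5,7)=1$ and $12\ge 49/5$, and $\langle S\rangle=\langle 5,7\rangle$ has gaps $1,2,3,4,6,8,9,11,13,16,18,23$. For $k=42$, Algorithm~\ref{alg:quasigreedypayment2} proceeds as follows:
\begin{enumerate}
\item For $s_3=12$ it rejects $q=3$ and $q=2$, since the remainders $6$ and $18$ are gaps. So $a_3=1$.
\item On the remainder $30$ it rejects every $q\ge 1$ for $s_2=7$, since the remainders $23,16,9,2$ are gaps. So $a_2=0$ and $a_1=6$.
\end{enumerate}
Thus $\mbox{\GRC}_S(42)=7$, whereas $42=6\cdot 7$ gives $\mbox{\MC}_S(42)\le 6$. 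For this greedy vector both inequalities you need fail: $a_1s_1=30>s_2$ and $a_1s_1+a_2s_2=30>s_3$. Correspondingly, $\Delta=(-6,6,-1)$ has negative sum.

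\textbf{A whole family.} This is not an isolated glitch. Suppose $\gcd(s_1,s_2)=1$ and $s_1+2\le s_2\le\frac{1+\sqrt5}{2}\,s_1$. Then $\{s_1,s_2,s_1+s_2\}$ satisfies the hypotheses. Put $F=s_1s_2-s_1-s_2$ and recall that $n\in\langle s_1,s_2\rangle$ if and only if $F-n\notin\langle s_1,s_2\rangle$. Take $k=(s_1+1)s_2$. For $q\ge 2$ we have $k-q(s_1+s_2)=F-s_1-(q-2)(s_1+s_2)$, and for $q\ge 1$ we have $(s_2-1)s_1-qs_2=F-(q-1)s_2$; all of these are gaps. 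Hence the greedy algorithm writes $k=(s_1+s_2)+(s_2-1)s_1$, which has $s_2$ summands, against $s_1+1$ copies of $s_2$. The set $\{5,8,13\}$ with $k=48$ is another instance.

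\textbf{Your fallbacks.} The route via Lemma~\ref{lemma:threegens} gives no leverage. The number $42$ is the smallest counterexample for $\{5,7,12\}$, and it has exactly the predicted shape $s_2y=s_1x+s_3z$ with $y<x+z$, namely $(x,y,z)=(6,6,1)$. The normalisation heuristics also fail here:
\begin{enumerate}
\item The greedy vector has $a_1=s_2-1=6$, far above $s_2/s_1$.
\item The block $6\cdot 5\ge s_3$ cannot absorb another $12$, because $30-12=18\notin\langle S\rangle$.
\end{enumerate}

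\textbf{Comparison with the paper.} The paper's proof is essentially your first paragraph with this difficulty passed over. It sets $a_3=\lfloor k/s_3\rfloor$, $a_2=\lfloor (k-a_3s_3)/s_2\rfloor$ and $a_1=(k-a_2s_2-a_3s_3)/s_1$, then treats this vector as $\mbox{\GREP}_S(k)$. That is precisely the identification you distrusted; for $k=42$ it gives $a_1=6/5$. What the argument establishes, yours or the paper's, is a weaker statement. Whenever the floor-division vector is a genuine representation, it coincides with $\mbox{\GREP}_S(k)$, Proposition~\ref{prop:3weights} applies (it holds with $a_j\ge 0$), and that vector is optimal.

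The counterexamples all have $s_3\in\langle s_1,s_2\rangle$. The statement allows this, and the paper itself applies the theorem to $\{2,3,5\}$. Any salvage would need extra hypotheses that control the values of $k$ for which the greedy algorithm has to back off.
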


\textit{Proof.} Let $k=a_1s_1+a_2s_2+a_3s_3=A_1s_1+A_2s_2+A_3s_3$ with $a_3 = \lfloor k/s_3 \rfloor$, $a_2=\lfloor (k-a_3s_3)/s_2 \rfloor$ and $a_1 =  (k-a_3s_3-a_2s_2)/s_1$. Our claim is equivalent to showing that $A_1+A_2+A_3 \geq a_1+a_2+a_3$ in such a scenario. In order to see this let $\Delta_1 = A_1-a_1, \Delta_2 = A_2-a_2,\Delta_3 = A_3-a_3$. 

Then the following relationships are satisfied:
$$\Delta_1 s_1+ \Delta_2s_2+\Delta_3s_3 = (A_1-a_1)s_1+(A_2-a_2)s_2+(A_3-a_3)s_3= 0$$
$$\Delta_j = A_j-a_j \geq -a_j$$
Moreover by Lemma \ref{lemm:3weights}:
$$s_3>s_1a_1+s_2a_2$$
$$s_2>s_1a_1$$
Together with the conditions $\gcd(s_1,s_2)=1$ and $s_3 \geq s_2^2/s_1$ from the statement's hypothesis, we can apply Proposition \ref{prop:3weights} to deduce that: 
$$\Delta_1+\Delta_2+\Delta_3 \geq 0$$
Which is equivalent to what we wanted to see:
$$A_1+A_2+A_3\geq a_1+a_2+a_3$$

\EndProof  

An interesting special case in embedding dimension three is when the largest generator is the Frobenius number of the other two. Theorem \ref{theo:three-gens1} will help us to advance in that direction. We start with the following consequence of Theorem \ref{theo:three-gens1} above:  

\begin{corollary} 
    \label{coro:enlarged-with-Frobenius1}
    Let $S = \{ a, b, c \}$, with $4 \leq a < b < c$, $\gcd(a,b)=1$, $\displaystyle b \leq \frac{1}{2} \left( a^2-a + \sqrt{a^4-2a^3-3a^2} \right)$, and $c = F(a,b)$, i.e. $c = ab-a-b$. Then $\langle S \rangle$ is greedy. 
\end{corollary}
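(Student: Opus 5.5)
We apply Theorem \ref{theo:three-gens1} with $s_1 = a$, $s_2 = b$, $s_3 = c = ab-a-b$. The hypotheses $\gcd(a,b)=1$ and $s_1<s_2$ are given directly. We must check two things. First, $s_2 < s_3$, i.e. $b < ab-a-b$; this is part of the assumption $a<b<c$, but it also follows from $a \geq 4$ and $b>a$, since $ab-a-2b = b(a-2)-a > 0$. Second, and this is the real content, we need $c \geq b^2/a$. The whole proof therefore reduces to showing that the upper bound on $b$ forces $a(ab-a-b) \geq b^2$.

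Rewrite that inequality as a quadratic in $b$:
\[
b^2 - (a^2-a)\,b + a^2 \leq 0 .
\]
Its discriminant is $(a^2-a)^2 - 4a^2 = a^4 - 2a^3 - 3a^2 = a^2(a-3)(a+1)$. This is positive for $a \geq 4$, which explains the hypothesis $a\ge 4$. The roots are
\[
b_\pm = \tfrac12\left(a^2-a \pm \sqrt{a^4-2a^3-3a^2}\right),
\]
and the inequality holds exactly when $b_- \leq b \leq b_+$. The upper bound $b \leq b_+$ is precisely the stated hypothesis.

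It remains to check $b \geq b_-$, which I expect is the only step that needs care. Since $b_+ b_- = a^2$ and $b_+ > a$, we get $b_- = a^2/b_+ < a < b$. To confirm $b_+ > a$, it suffices that $a^2 - a > 2a$, i.e. $a > 3$, which holds. Hence $b$ lies between the roots, so $s_3 \geq s_2^2/s_1$, and Theorem \ref{theo:three-gens1} gives that $\langle S\rangle$ is greedy.
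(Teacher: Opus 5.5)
Your reduction is exactly the paper's proof: check $c\ge b^2/a$, then invoke Theorem~\ref{theo:three-gens1}. Your algebra is correct, and it fills in what the paper leaves implicit: the quadratic $b^2-(a^2-a)b+a^2\le 0$, its discriminant $a^2(a-3)(a+1)$, and the check $b_-=a^2/b_+<a<b$.

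The gap is in the final step, and you inherit it from the paper: Theorem~\ref{theo:three-gens1} does not hold for the greedy representation of Algorithm~\ref{alg:quasigreedypayment2}. Its proof takes $a_3=\lfloor k/s_3\rfloor$ and $a_2=\lfloor (k-a_3s_3)/s_2\rfloor$, and that choice is what lets Lemma~\ref{lemm:3weights} give $s_1a_1<s_2$. The algorithm instead takes the largest $q$ leaving a remainder in $\langle S\rangle$, which only guarantees $a_1<s_2$; the floor choice need not even produce an integer $a_1$. Concretely, $S=\{6,11,25\}$ satisfies $\gcd(6,11)=1$ and $25\ge 121/6$. Yet for $k=55$ the algorithm returns $25+5\cdot 6$, because $55-2\cdot 25=5$, $30-11=19$ and $30-22=8$ are not in $\langle S\rangle$. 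That uses six summands, while $55=5\cdot 11$ uses five. So $c\ge b^2/a$ alone proves nothing here, and you must use $c=ab-a-b$.

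A direct argument does this. Let $(a_1,a_2,a_3)$ be the output of the algorithm and $r=k-a_3c$. Maximality of $a_3$ gives $A_3\le a_3$ for every representation $(A_1,A_2,A_3)$ of $k$. Maximality of $a_2$ and $a_3$ also forces $r-a_2b\in\langle a\rangle$ and $a_1<b$. Hence the greedy cost is $a_3+C(r)$, where $C(x)=\bigl(x+\alpha(x)(b-a)\bigr)/b$ is the optimal cost of $x\in\langle a,b\rangle$ in $\langle a,b\rangle$, with $\alpha(x)\in[0,b-1]$ and $a\,\alpha(x)\equiv x\pmod b$. Suppose $A_3=a_3-j$ with $j\ge 1$. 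Since $c\equiv -a\pmod b$, we get $\alpha(r+jc)\equiv \alpha(r)-j\pmod b$, hence $\alpha(r)-\alpha(r+jc)\le j$, and
\[
(a_3-j)+C(r+jc)-\bigl(a_3+C(r)\bigr)=\frac{j(c-b)-\bigl(\alpha(r)-\alpha(r+jc)\bigr)(b-a)}{b}\ \ge\ \frac{j(c-2b+a)}{b}=j(a-3)\ \ge\ 0 .
\]
Thus the greedy representation is optimal whenever $a\ge 3$, with no bound on $b$ at all. This proves the corollary, and also Theorem~\ref{theo:enlarged-with-Frobenius2}, without going through Theorem~\ref{theo:three-gens1}.
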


\textbf{Proof}: If $a \geq 4$ and $\displaystyle b \leq \frac{1}{2} \left( a^2-a + \sqrt{a^4-2a^3-3a^2} \right)$ then $\displaystyle c \geq \frac{b^2}{a}$, and by Theorem \ref{theo:three-gens1} above, $\langle S \rangle$ is greedy. 

\EndProof 
\\\\ 
We now want to remove the restriction on $b$. Ideally we would like to extend the previous result to all sets $S = \{ a, b, c \}$, with $1 < a < b < c$, $\gcd(a,b)=1$, and $c = F(a,b)$. Let's start with the following observation: 

\begin{lemma}
    \label{lemma:Frobenius-coprime}
    Let $a,b \in \N$, with $1 < a < b$ and $\gcd(a,b)=1$, and let $c = F(a,b) = ab-a-b$. Then $a$ and $c$ are coprime. 
\end{lemma}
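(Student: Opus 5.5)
The plan is to work modulo $a$ and reduce the question to the coprimality of $a$ and $b$. Let $d = \gcd(a,c)$. Since $d$ divides $a$, it divides both $ab$ and $a$. Writing
\begin{displaymath}
b = ab - a - c,
\end{displaymath}
we see that $d$ divides $b$. Hence $d$ divides $\gcd(a,b) = 1$, so $d = 1$.

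Equivalently, one can work in $\Z/a\Z$:
\begin{displaymath}
c = ab - a - b \equiv -b \pmod{a}.
\end{displaymath}
Because $\gcd(a,b)=1$, the class of $-b$ is a unit modulo $a$. Therefore $c$ is invertible modulo $a$, which means $\gcd(a,c)=1$.

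The only point that needs care is that $c$ is a positive integer, so that the gcd is meaningful in the usual sense. Since $1 < a < b$, we have $a \geq 2$ and $b \geq 3$, and
\begin{displaymath}
c = (a-1)(b-1) - 1 \geq 1 \cdot 2 - 1 = 1 .
\end{displaymath}
Even this is not essential, since the gcd argument works for any integer $c$. So there is no real obstacle: the whole argument is the single divisibility step above. I expect the authors' proof to be exactly this one-line computation, probably phrased as "any common divisor of $a$ and $c$ divides $b$".
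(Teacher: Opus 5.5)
Your proof is correct and takes essentially the paper's route: the authors reduce $c = ab-a-b \equiv -b \pmod{a}$ and conclude $\gcd(a,c)=\gcd(a,-b)=\gcd(a,b)=1$, which is your second formulation, and your common-divisor version is the same step written out. Your check that $c \geq 1$ does not appear in the paper, but it is harmless.
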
 

\textbf{Proof}: We want to check that 
\begin{displaymath}
    \gcd(a, c) = \gcd(a, ab - a - b) = 1
\end{displaymath} 
Now simplify  $c \ (\mbox{mod} \ a)$. Since $ab \equiv 0  \ (\mbox{mod} \ a)$ and $a \equiv 0  \ (\mbox{mod} \ a)$, we get: 
\begin{displaymath}
c = ab - a - b \equiv -b  \ (\mbox{mod} \ a), 
\end{displaymath} 
which means that 
\begin{displaymath}
    \gcd(a, c) = \gcd(a, -b) = \gcd(a, b) = 1, 
\end{displaymath} 
for the greatest common divisor is invariant under negation. 
\EndProof 

Now we proceed with our main goal:
\vspace{2mm} 
\begin{theorem}
    \label{theo:enlarged-with-Frobenius2} 
    Let $S = \{ a, b, c \}$, with $4 \leq a < b < c$, $\gcd(a,b)=1$,  and $c = F(a,b)$, i.e. $c = ab-a-b$. Then $\langle S \rangle$ is greedy. 
\end{theorem}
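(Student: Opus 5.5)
The plan is to redo the argument of Proposition \ref{prop:3weights} with $s_1=a$, $s_2=b$, $s_3=c=ab-a-b$. The only place where that proof used the hypothesis $s_3\ge s_2^2/s_1$ will be replaced by an arithmetic observation specific to $c=F(a,b)$, namely $c\equiv -a \pmod b$.

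\textbf{Setup.} As in the proof of Theorem \ref{theo:three-gens1}, take $k\in\langle S\rangle$ with greedy representation $(a_1,a_2,a_3)$ and any other representation $(A_1,A_2,A_3)$. Put $\Delta_j=A_j-a_j$. Then
\begin{displaymath}
\Delta_1a+\Delta_2b+\Delta_3c=0, \qquad \Delta_j\ge -a_j .
\end{displaymath}
By Lemma \ref{lemm:3weights}, $aa_1+ba_2<c$ and $aa_1<b$. (The lemma is stated for positive $a_j$, but its floor argument works verbatim when some $a_j=0$.) Exactly as in Proposition \ref{prop:3weights}, these facts give $\Delta_3\le 0$. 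The case $\Delta_3=0$ is handled there using only $\gcd(a,b)=1$ and $aa_1<b$. So we may assume $m:=-\Delta_3\ge 1$, which gives $\Delta_1a+\Delta_2b=mc$. The goal is then $\Delta_1+\Delta_2\ge m$.

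\textbf{Easy sign cases.} Suppose $\Delta_1,\Delta_2\ge 0$. Then $\Delta_1+\Delta_2\ge mc/b>m$. Suppose instead $\Delta_2<0$. Then $\Delta_1a=mc+|\Delta_2|b$, so
\begin{displaymath}
\Delta_1+\Delta_2=\frac{mc+|\Delta_2|(b-a)}{a}>m .
\end{displaymath}
Both cases cannot fail, so the remaining case is $\Delta_1=-d$ with $1\le d\le a_1<b/a$ and $\Delta_2>0$. Here
\begin{displaymath}
\Delta_1+\Delta_2=\frac{mc+da}{b}-d=\frac{mc-d(b-a)}{b},
\end{displaymath}
so the inequality $\Delta_1+\Delta_2\ge m$ is equivalent to
\begin{displaymath}
m(c-b)\ge d(b-a).
\end{displaymath}
Bounding $d$ only by $b/a$ leads back to the condition $c\ge b^2/a$, which is why Corollary \ref{coro:enlarged-with-Frobenius1} needed the extra restriction on $b$. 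This is the main obstacle.

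\textbf{Key step.} To get past it, reduce $-da+\Delta_2b=mc$ modulo $b$. Since $c=ab-a-b\equiv -a \pmod b$, this gives $-da\equiv -ma\pmod b$. Because $\gcd(a,b)=1$ (compare Lemma \ref{lemma:Frobenius-coprime}), it follows that $d\equiv m\pmod b$. As $1\le d<b$, $d$ is exactly the least positive residue of $m$ modulo $b$, so $d\le m$. It therefore suffices to check $c-b\ge b-a$, that is $ab-a-b\ge 2b-a$, which reduces to $a\ge 3$ and holds under our hypothesis $a\ge4$. Hence $\Delta_1+\Delta_2+\Delta_3\ge 0$ in every case. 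So the greedy representation has minimal length, and $\langle S\rangle$ is greedy.

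In writing this up I would double-check two points: the $\Delta_3=0$ case from Proposition \ref{prop:3weights}, and the use of Lemma \ref{lemm:3weights} when some greedy coefficients vanish.
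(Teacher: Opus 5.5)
Your argument is correct in substance and takes a genuinely different route from the paper. The paper splits on the size of $b$. For $b \le \frac12\bigl(a^2-a+\sqrt{a^4-2a^3-3a^2}\bigr)$ it invokes Corollary~\ref{coro:enlarged-with-Frobenius1}, i.e.\ Theorem~\ref{theo:three-gens1} with $c\ge b^2/a$. For larger $b$ it uses Lemma~\ref{lemma:threegens} (the smallest counterexample is a multiple $by=ax+cz$), bounds the critical range using $F(\Semi)\le c-1$, and writes $x=z+bt$. It then shows that the only candidate in range is $2b(a-1)=2a+2c$, which it checks by hand.

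You instead stay inside the exchange argument of Proposition~\ref{prop:3weights}. You replace the size hypothesis $c\ge b^2/a$ by the congruence $c\equiv -a \pmod b$, which forces $d\equiv m \pmod b$ and hence $d\le m$. This buys several things:
\begin{itemize}
\item it handles all $b$ uniformly;
\item it needs neither Lemma~\ref{lemma:threegens} nor the critical range;
\item it uses only $a\ge 3$.
\end{itemize}
Since $a=2$ is incompatible with $b<c$ (it gives $c=b-2$), your proof covers every admissible case. That includes $a=3$, which the paper leaves open in Section~\ref{sec:open}. The paper's approach is more algorithmic: a finite search for witnesses, which may adapt to other families. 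Yours is a clean direct proof of a slightly stronger statement.

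One step needs a different justification, and it is not the vanishing-coefficient issue you flagged. Lemma~\ref{lemm:3weights} describes the floor greedy. Algorithm~\ref{alg:quasigreedypayment2} instead takes, at each step, the largest $q$ whose remainder stays in $\Semi$. For that greedy the inequalities you quote can fail. In $\langle 4,5,11\rangle$, the greedy representation of $8$ is $4+4$, so $aa_1=8>b$. The greedy representation of $23$ is $11+4+4+4$, so $aa_1+ba_2=12>c$. For such $k$ the floor coefficients do not even give an integral $a_1$, so the lemma simply does not apply.

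However, you only use two consequences, and both follow directly from the maximality built into the algorithm:
\begin{itemize}
\item $A_3\le a_3$, because $k-A_3c=A_1a+A_2b\in\Semi$;
\item $a_1\le b-1$, because otherwise $a_1a-b=(a_1-b)a+(a-1)b\in\Semi$, contradicting the maximality of $a_2$.
\end{itemize}
With $d\le a_1\le b-1$, both the case $\Delta_3=0$ and your congruence step go through verbatim. In the case $\Delta_3=0$, $\Delta_1=bt$ with $t<0$ would give $\Delta_1\le -b<-a_1$. In the congruence step, $1\le d<b$ is all you need. You should write it this way rather than citing Lemma~\ref{lemm:3weights}. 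The paper's proof of Theorem~\ref{theo:three-gens1}, which covers the small-$b$ case here, leans on that lemma in the same way.
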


\textbf{Proof}: The case $\displaystyle b \leq \frac{1}{2} \left( a^2-a + \sqrt{a^4-2a^3-3a^2} \right)$ is already covered by Corollary \ref{coro:enlarged-with-Frobenius1}, hence we may assume that  $\displaystyle b > \frac{1}{2} \left( a^2-a + \sqrt{a^4-2a^3-3a^2} \right)$. According to Lemma \ref{lemma:threegens} we must look for multiples of $b$ which lie inside the critical range, and which can be represented as $ax+cz$, with $x, z \in \N$. In other words, we are looking for multiples $by \in \langle a,c \rangle$ (since by Lemma \ref{lemma:Frobenius-coprime}, $a$ and $c$ generate a numerical semigroup), such that $by=ax+cz$ with $x, z \in \N$, and such that $by$ also lies in the critical range of $\langle a,b,c \rangle$. Now, $c-1$ is an upper bound for $F(a,b,c)$, hence the critical range of $\langle a,b,c \rangle$ is a subinterval of $[a+c+2; \ c-1 + c + b] = [ab-b+2; \ 2ab-2a-b-1]$. So, we have 
\begin{align*}
    by &= ax + cz = ax + z(ab - a - b) \\
        &= ax + abz - az - bz = a(x - z) + bz(a - 1), 
\end{align*} 
whence
\begin{displaymath}
    y = \frac{a(x - z)}{b} + z(a - 1). 
\end{displaymath} 
Since $y$ is an integer, $b$ must divide $a(x-z)$. Since $\gcd(a,b)=1$, then $b$ must divide $(x-z)$. Thus, $x-z =bt$ and $x = z + bt$ for some $t \in \Z$. We must also ensure that $x > 0$ and $z > 0$, so $z + bt > 0$ implies that $t > -z/b$. 

So, 
\begin{displaymath}
     by = a(x - z) + bz(a - 1) = abt + bz(a - 1), 
\end{displaymath}
hence  
\begin{displaymath}
    y = at + z(a - 1). 
\end{displaymath} 
Now we introduce the upper bound provided by the critical range 
\begin{displaymath}
    by = b(at + z(a - 1)) \leq 2ab-2a-b-1, 
\end{displaymath}
which means that 
\begin{displaymath}
    at + z(a - 1) \leq 2a - 1 - \frac{2a + 1}{b}. 
\end{displaymath} 
Taking into account that $\displaystyle b > \frac{1}{2} \left( a^2-a + \sqrt{a^4-2a^3-3a^2} \right)$ we have that $2a+1 < b$ for $a \geq 4$. Hence
\begin{displaymath}
    at + z(a - 1) \leq 2a - 2. 
\end{displaymath} 
We now try values of $t \in \Z$ such that both $x = z + bt > 0$ and $z > 0$, and the upper bound above is satisfied. Let's start by checking negative values of $t$. 

Let's plug in $t=-1$, for instance, which makes $x = z - b$. On one hand, to ensure that $x > 0$ we must have $z > b$. On the other hand,  $-a + z(a - 1) \leq 2a - 2$ implies that $\displaystyle z \leq \frac{3a - 2}{a - 1}$. That gives us a maximum value of $z = 2$, which is a contradiction. Thus, $t=-1$ is impossible. 

More generally, let $t$ be a negative integer, and set $j=-t$. Thus, $j$ is a positive integer. Again we must have 
\begin{align*}
     at + z(a - 1) &\leq 2a - 2 \\
     z(a - 1) - aj &\leq 2a - 2
\end{align*} 
and $z>bj$. Therefore, 
\begin{displaymath}
    bj < z \leq j+2 + \frac{j}{a-1} 
\end{displaymath} 
The set of inequalities $\displaystyle bj < j+2 + \frac{j}{a-1}$ and $a<b$ only has a potential feasible solution, namely 
\begin{displaymath}
    a \geq 2, \; j < \frac{2a-2}{ab-a-b}
\end{displaymath} 
However, for $a \geq 4$ we have that $\displaystyle \frac{2a-2}{ab-a-b} < 1$, which is a contradiction with our assumptions about $j$. Therefore, $t$ cannot be a negative integer. 

Let us now check the case of $t=0$. We now have $z(a-1) \leq 2a-2$, which implies that $\displaystyle z \leq \frac{2a-2}{a-1} = 2$. Hence there are two possible values for $z$, namely $z=1$ or $z=2$. If $z=1$ we get $y=a-1$, but $by=b(a-1) < ab-b+2$ (i.e., $b(a-1)$ falls outside the critical range). On the other hand, if $z=2$ we get $y=2(a-1)$, and $by=2b(a-1)$ is inside the critical range, so it is a valid candidate for being a witness, and hence a counterexample. 

Finally, let's check positive values of $t$, starting with $t=1$. In that case we have $a+z(a-1) \leq 2a-2$, which implies that $z<1$, which is impossible. If $t \geq 2$ then $\displaystyle z \leq 2 - \frac{a}{a-1}t$. The term $\displaystyle \frac{a}{a-1}t > 2$ for $a \geq 4$ and $t \geq 2$, which again means that $z<1$, which contradicts our assumptions. 

In summary, there is only one feasible multiple $by$ that falls inside the critical range, namely $by = 2b(a-1) = 2a+2c$, which might be a potential counterexample. However, 
\begin{displaymath}
    \mbox{\GRC}_S\bigl( 2b(a-1) \bigr)=\mbox{\MC}_S\bigl( 2b(a-1) \bigr) = 4.
\end{displaymath}
Hence, $2b(a-1)$ is not a counterexample. This proves that $\langle S \rangle$ is greedy, as claimed. 

\EndProof 

\section{Semigroup families generated by the Fibonacci sequence} 
\label{sec:totallygreedy} 

If we have a greedy set $S$, a straightforward question is to determine which subsets of $S$ are also greedy. We now introduce the following definition: 

\begin{definition}
	\label{def:totallygreedyset} 
	Let $S = \{ s_1, s_2, \ldots, s_t \}$ be a set such that $1 \leq s_1 < s_2 \cdots < s_t$. We say that $S$ is \emph{totally greedy}\footnote{Also called normal, or totally orderly.} if every prefix subset $\{ s_1, s_2, \ldots, s_i \}$, where $1 \leq i \leq t$, is greedy. 
\end{definition} 

Definition \ref{def:totallygreedyset} can be extended to infinite sequences in a straightforward way:   

\begin{definition}
	\label{def:totallygreedysequence} 
	Let $S = \{ s_n \}_{n=1}^{\infty}$ be an integer sequence, with $s_1 \geq 1$ and $s_i < s_{i+1}$ for all $i \in \N$. We say that $S$ is \emph{totally greedy} (or simply, greedy) if every prefix subset $S^{(t)} = \{ s_1, s_2, \ldots, s_t \}$ is greedy, for any arbitrary $t \in \N$.  
\end{definition} 

Totally greedy sequences having $s_1 = 1$ were investigated in \cite{PR24a,PR25-Istambul}. All the results contained in \cite{PR24a,PR25-Istambul} have been condensed in a single paper in \cite{PR24b}. Now the aim is to expand these investigations to more general sequences, where $s_1 \geq 1$. One such sequence is the (shifted) Fibonacci sequence.

 Let $\big\{ F_n \big\}_{n=0}^\infty = \{ 1, 1, 2, 3, 5, 8, 13, \ldots \}$ denote the Fibonacci sequence, and define $\big\{ _rF_n \big\}_{n=0}^\infty = \{ F_r, F_{r+1}, F_{r+2}, \ldots \} = \big\{ F_{r+n} \big\}_{n=0}^\infty$, where $r \in \N_0$, i.e. $\big\{ _rF_n \big\}_{n=0}^\infty$ is the Fibonacci sequence shifted $r$ places to the left. Thus, $\big\{ _0F_n \big\}_{n=0}^\infty$ is just the original Fibonacci sequence $\big\{ F_n \big\}_{n=0}^\infty$. 

In \cite{PR24a} it was proved that $\big\{ _1F_n \big\}_{n=0}^\infty$ is totally greedy. Actually, the greediness of the (full) Fibonacci sequence has been known for a long time under a different guise. The greedy representation of a number with respect to $\big\{ _1F_n \big\}_{n=0}^\infty$ (which also turns out to be optimal in the sense that the number of digits is minimal) is known as the Zeckendorf representation, after the Belgian amateur mathematician Edouard Zeckendorf, who (re)discovered it in 1972 \cite{Zeck72}. The Zeckendorf representation of a number $k$ has two significant properties:
\begin{enumerate}
    \item It does not contain two consecutive Fibonacci numbers, $F_j$ and $F_{j+1}$, and 
    \item It does not contain two occurrences of a Fibonacci number $F_j$.
\end{enumerate} 
Now we want to investigate $\big\{ _rF_n \big\}_{n=0}^\infty$, for $r>1$. In particular we are going to prove the following 

\begin{theorem}
    \label{theo:Fibonacci2}
    The sequence $\big\{ _2F_n \big\}_{n=0}^\infty = \{ 2, 3, 5, 8, 13, \ldots \}$ is totally greedy. 
\end{theorem}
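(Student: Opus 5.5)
We will handle the prefixes $S^{(t)}=\{F_2,F_3,\ldots,F_{t+1}\}=\{2,3,5,\ldots\}$ by size. Sets with $t\le 2$ are greedy, as noted after Theorem \ref{theo:counterexample}. For $t=3$ the set is $\{2,3,5\}$, and Theorem \ref{theo:three-gens1} applies directly: $\gcd(2,3)=1$ and $5\ge 3^2/2$.

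For $t\ge 3$ we have $\langle S^{(t)}\rangle=\langle 2,3\rangle=\N_0\setminus\{1\}$, so $F(\Semi)=1$. The greedy algorithm therefore takes a very simple form: at each step it removes the largest generator $s_j\le k$ with $k-s_j\neq 1$. Theorem \ref{theo:witness} then reduces the problem to showing that there is no witness in the bounded window
\[
9\le k\le 1+s_t+s_{t-1}=1+F_{t+2}.
\]
We will prove greediness for all $t$ simultaneously by induction on $k$, using the following comparison with the classical case.

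Let $C(k)$ denote the Zeckendorf cost of $k$, i.e.\ the minimum number of terms from $\{1,2,3,5,\ldots\}$ summing to $k$. This is optimal by the totally greedy property of $\{ _1F_n\}$ proved in \cite{PR24a}. The key intermediate statement we aim for is a two-sided description:
\[
C(k)\le \mbox{\MC}_S(k)\le \mbox{\GRC}_S(k)\le C(k)+\varepsilon(k),
\]
where $\varepsilon(k)\in\{0,1\}$ is explicit from the Zeckendorf representation of $k$. We expect $\varepsilon(k)=0$ unless the representation ends in a summand $1$ that cannot be absorbed. The examples $4=3+1\mapsto 2+2$ and $6=5+1\mapsto 3+3$ have $\varepsilon=0$, while $9=8+1\mapsto 5+2+2$ has $\varepsilon=1$. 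We must then show that in the case $\varepsilon(k)=1$ no representation with $C(k)$ parts avoids $1$.

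The upper bound will follow by tracing the greedy algorithm. Whenever the Zeckendorf digits do not include $1$, the greedy and Zeckendorf steps coincide. When they do, the forbidden step $k-s_j=1$ forces greedy to drop to $s_{j-1}$ and then handle $s_{j-2}+1=F_{j-1}+1$, which is a smaller instance. So a recursion on the tail of the Zeckendorf expansion gives $\varepsilon$ explicitly.

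The lower bound in the case $\varepsilon(k)=1$ is where we expect the real difficulty. There we need a parity or "last-digit" argument showing that any representation by $C(k)$ Fibonacci numbers of $k$ must use $1$. The plan is to use the known rigidity of minimal Fibonacci representations: minimal-length representations are obtained from Zeckendorf's by local moves $F_{j+1}\leftrightarrow F_j+F_{j-1}$ and $2F_j\leftrightarrow F_{j+1}+F_{j-2}$. We then check that these moves can eliminate a $1$ only in the situations where we declared $\varepsilon=0$.

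Once the two-sided bound is established, the witness inequality $\mbox{\GRC}_S(k)\le \mbox{\GRC}_S(k-s_i)+1$ follows. Indeed,
\[
\mbox{\GRC}_S(k)=\mbox{\MC}_S(k)\le \mbox{\MC}_S(k-s_i)+1\le \mbox{\GRC}_S(k-s_i)+1,
\]
so no witness exists and Theorem \ref{theo:witness} gives greediness of every prefix. If the general characterization of $\varepsilon$ proves too unwieldy, the fallback is to work directly in the window $[9,1+F_{t+2}]$. In that range, $k$ has Zeckendorf form $F_{t+1}+r$ or $F_t+r$ with $r$ small, so only a few tail patterns need to be checked by the same local-move analysis.
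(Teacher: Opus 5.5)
\textbf{There is a genuine gap: the bound $\mbox{\GRC}_S(k)\le C(k)+\varepsilon(k)$ with $\varepsilon(k)\in\{0,1\}$, which carries your whole argument, is false, already inside the critical window.} Take $S=\{2,3,5,8,13,21\}$, whose critical range is $[9,35]$, and $k=22$. Its Zeckendorf form is $21+1$, so $C(22)=2$. Greedy returns $13+5+2+2$, and no $1$-free representation has three parts: removing a largest part $21$, $13$ or $8$ leaves $1$, $9$ or $14$, none of which is a sum of two elements of $S$, and three parts $\le 5$ sum to at most $15$. Hence $\mbox{\GRC}_S(22)=\mbox{\MC}_S(22)=4=C(22)+2$.

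Your own recursion explains why. The step $F_j+1\mapsto F_{j-1}+(F_{j-2}+1)$ adds one summand each time and only stops at $4=2+2$ or $6=3+3$. So greedy writes $F_j+1$ with $\lfloor (j+1)/2\rfloor$ parts, while $C(F_j+1)=2$. Taking $k$ equal to the largest generator plus $1$, the excess is therefore unbounded within the windows.

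This breaks the lower-bound half of your plan, not just the bookkeeping. Analysing moves among \emph{minimum-length} Fibonacci representations can at best show that none of length $C(k)$ avoids $1$, i.e.\ $\mbox{\MC}_S(k)\ge C(k)+1$. Once the gap is $2$ or more, you must also exclude $1$-free representations of length $C(k)+1,\dots$. These are non-minimal Fibonacci representations, about which that rigidity says nothing. The rigidity statement is itself unproved, and the move $F_{j+1}\leftrightarrow F_j+F_{j-1}$ does not even preserve length.

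The fallback does not repair this. If you write $k$ as the largest generator plus $r$, then $r$ ranges over a whole interval up to the second-largest generator plus one, so there are not ``a few tail patterns''. Even the smallest tail, $r=1$ (that is, $k=22$ above), already defeats the local-move analysis. A smaller issue: with $C$ taken over the infinite sequence, $k=F_{t+2}$ has no $1$ in its Zeckendorf form, yet $\mbox{\GRC}_{S^{(t)}}(k)=2>1=C(k)$.

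What is missing is a direct description of optimal $1$-free representations, rather than a comparison with Zeckendorf. The paper supplies this in two parts:
\begin{itemize}
\item The greedy representation over $\{2,3,5,\dots\}$ is the unique one with no two consecutive Fibonacci parts and no repeated part except a single doubled $2$ or $3$. That terminal $2+2$ or $3+3$ is what carries the growing excess.
\item The rewriting rules $F_j+F_{j+1}\to F_{j+2}$, $2F_j\to F_{j+1}+F_{j-2}$ $(j\ge 4)$, $2+2+2\to 3+3$ and $3+3+3\to 5+2+2$ never increase the number of parts. They terminate at that normal form, since each raises the colexicographic order.
\end{itemize}
Induction on the prefix length then confines attention to $k$ between the largest generator and the top of the critical range.
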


Before setting out to prove Theorem \ref{theo:Fibonacci2} let us take note of a few facts. Let $S = S^{(t)} = \{ 2,3,5,\ldots, F_t \}$ be a prefix subset of the shifted Fibonacci sequence $\big\{ _2F_n \big\}_{n=0}^\infty$, where $t > 3$ is fixed. Note that $\gcd(S) = 1$, so that $S$ generates a numerical semigroup $\Semi$ with Frobenius number $F(\Semi) = 1$. The critical range of $\Semi$ is 
\begin{displaymath}
    \big[ 9; \ F_{t} + F_{t-1} + 1 \big] = \big[ 9; \ F_{t+1} + 1 \big]
\end{displaymath} 


In order to prove Theorem \ref{theo:Fibonacci2} we will need some preliminary results, starting with the following lemma (see \cite{PR25}): 

\begin{lemma}[\cite{PR25}]
\label{lemma:kozaks}
    Let $\Semi = \langle S \rangle$ be a numerical semigroup generated by the set $S = \{ s_1, \ldots, s_t \}$. Additionally, let $k \in \Semi$ and $s_i$ be such that $k-s_i \in \Semi$. Then 
    \begin{equation}
        \label{eq:lemmakozen}
        \mbox{\MC}_S(k) \leq \mbox{\MC}_S(k-s_i)+1, 
    \end{equation}
    with equality holding if, and only if, there exists an optimal representation of $k$ that uses the generator $s_i$. 
\end{lemma}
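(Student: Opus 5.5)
The inequality is the easy half and is proved by concatenation. Since $k - s_i \in \Semi$, take a minimal representation $\textbf{a} = (a_1, \ldots, a_t)$ of $k - s_i$, so that $\sum_j a_j = \mbox{\MC}_S(k-s_i)$. Increasing $a_i$ by one gives a vector $\textbf{a}'$ with nonnegative entries and $\sum_j a'_j s_j = k$. Hence $\textbf{a}'$ is a representation of $k$ with $\mbox{\MC}_S(k-s_i)+1$ summands. By minimality, $\mbox{\MC}_S(k) \leq \mbox{\MC}_S(k-s_i)+1$.

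For the equivalence I would prove the two directions separately, again using only the removal or insertion of a single copy of $s_i$.

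\emph{Equality implies $s_i$ is used.} Suppose $\mbox{\MC}_S(k) = \mbox{\MC}_S(k-s_i)+1$. The vector $\textbf{a}'$ built above has length $\mbox{\MC}_S(k-s_i)+1 = \mbox{\MC}_S(k)$, so it is an optimal representation of $k$. Its $i$-th entry is $a_i + 1 \geq 1$, so it uses $s_i$.

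\emph{Use of $s_i$ implies equality.} Let $\textbf{b}$ be an optimal representation of $k$ with $b_i \geq 1$. Replacing $b_i$ by $b_i - 1$ gives a valid representation of $k - s_i$ with $\mbox{\MC}_S(k)-1$ summands. Therefore $\mbox{\MC}_S(k-s_i) \leq \mbox{\MC}_S(k)-1$. Combined with the inequality already proved, this forces equality.

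There is no real obstacle. The only care needed is bookkeeping: the hypothesis $k - s_i \in \Semi$ is what makes $\mbox{\MC}_S(k-s_i)$ well defined. If $k = s_i$, then $k - s_i = 0$ and we use the convention $\mbox{\MC}_S(0) = 0$, given by the null vector. The argument works for any generating set, with no assumption on $s_1$.
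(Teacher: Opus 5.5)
Your proof is correct and complete. You get the inequality by appending one copy of $s_i$ to a minimal representation of $k-s_i$, and both directions of the equivalence by adding or removing a single copy of $s_i$; the convention $\mbox{\MC}_S(0)=0$ correctly covers $k=s_i$. The paper states this lemma without proof, citing \cite{PR25}, and your argument is the standard proof of this result.
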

\vspace{3mm}  

The greedy representation of a number with respect to $\big\{ _2F_n \big\}_{n=0}^\infty$ is similar to the Zeckendorf representation (i.e. the greedy representation with respect to $\big\{ _1F_n \big\}_{n=0}^\infty$). The following definition and the ensuing lemmas look into the properties of our Zeckendorf-type representation. 

\begin{definition} 
\label{def:properties}
    Given a positive integer $k$, and a representation $r(k)$ in terms of Fibonacci numbers. We say that $r(k)$ has  
    \begin{itemize}
        \item \textbf{Property 1}: If $r(k)$ does not contain two consecutive Fibonacci numbers, $F_j$ and $F_{j+1}$, and 
        \item \textbf{Property 2}: If $r(k)$ does not contain two occurrences of a Fibonacci number $F_j$, except possibly $F_2 = 2$ or $F_3 = 3$, which may appear twice (but not at the same time). 
    \end{itemize} 
\end{definition} 

It turns out that the greedy representation has both properties:
\begin{lemma}
    \label{lemma:properties} Let $k \geq 2$ be an arbitrary positive  integer. Then, the greedy representation of $k$ with respect to $\big\{ _2F_n \big\}_{n=0}^\infty$ satisfies Properties 1 and 2 of Definition \ref{def:properties} above. 
\end{lemma}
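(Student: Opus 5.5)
The plan is strong induction on $k$, working with respect to a prefix set $S^{(t)}$ with $t$ large enough that $F_t \le k < F_{t+1}$ is not a restriction. Concretely, we take the whole sequence $\big\{ _2F_n \big\}_{n=0}^\infty$ and let $F_j$ be the largest Fibonacci number with $F_j \le k$.

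First we make the greedy step explicit. Since $\Semi = \langle 2,3,5,\ldots\rangle$ contains every integer except $1$, the quotient chosen by Algorithm \ref{alg:quasigreedypayment2} at generator $s_i$ is $q=\lfloor k/s_i\rfloor$ whenever the remainder $k - q s_i \neq 1$. When that remainder equals $1$, the algorithm takes $q-1$ instead. Because $F_{j+1} < 2F_j$ for all $j \ge 2$, we have $\lfloor k/F_j\rfloor = 1$. Hence the first step produces $r = k - F_j$ with $0 \le r < F_{j-1}$, except in the single case $r = 1$.

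\emph{Case $r \neq 1$.} The greedy representation of $k$ is $F_j$ followed by the greedy representation of $r$. This holds because the algorithm continues at lower indices, and $F_{j-1} > r$ contributes nothing. If $r=0$ we are done. Otherwise $r \ge 2$ and $r < F_{j-1}$, so every summand in the greedy representation of $r$ is at most $F_{j-2}$. By the induction hypothesis that representation has Properties 1 and 2. Adding $F_j$ creates neither a consecutive pair nor a repetition, since all other summands have index at most $j-2$. The possible repeated $2$ or $3$ is inherited from $r$, and only one of them can occur.

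\emph{Case $r = 1$, i.e.\ $k = F_j + 1$.} This is the step I expect to be the main obstacle, because here the greedy algorithm skips $F_j$ and the structure is less transparent. I will prove by induction on $m$ the explicit formula
\[
\mbox{\GREP}(F_m + 1) = F_{m-1} + \mbox{\GREP}(F_{m-2}+1),
\]
with base cases $F_3+1 = 4 = 2+2$ and $F_4+1 = 6 = 3+3$. (Also $F_2+1 = 3$ is itself a generator, so it needs no separate treatment.)

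To prove the formula, write $F_m + 1 = F_{m-1} + (F_{m-2}+1)$. For $m \ge 5$ we have $F_{m-2}+1 < F_{m-1}$, so the quotient at $F_{m-1}$ is $1$ and the remainder $F_{m-2}+1 \ge 2$ is admissible. The remainder is also smaller than $F_{m-1}$, so the algorithm then works on $F_{m-2}+1$ using generators of index at most $m-2$. That generator $F_{m-2}$ is again skipped, because its remainder would be $1$.

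Unrolling the formula gives
\[
\mbox{\GREP}(F_m+1) = F_{m-1} + F_{m-3} + F_{m-5} + \cdots,
\]
ending in $2+2$ or $3+3$. The indices differ by $2$, except for the final doubled term. The last single term before the doubled pair has index $4$ or $5$, hence is not adjacent to index $2$ or $3$; for example $9 = 5+2+2$ and $14 = 8+3+3$. So Property 1 holds, and Property 2 holds with exactly one allowed duplicate.

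It remains to check small values ($k \le 8$) directly to anchor both inductions. The only delicate points are then confirming $\lfloor k/F_j \rfloor = 1$ and the inequality $F_{m-2}+1 < F_{m-1}$ for the relevant indices.
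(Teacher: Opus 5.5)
Your proof is correct and follows essentially the same route as the paper. The generic case $k - F_j \neq 1$ is handled by appending $F_j$ to the greedy representation of a remainder below $F_{j-1}$. In the exceptional case $k = F_j + 1$, greedy skips $F_j$, takes $F_{j-1}$, and recurses on $F_{j-2}+1$ until it stops at $4 = 2+2$ or $6 = 3+3$. The paper organizes the same argument by the intervals $[F_t+2, F_{t+1}+1]$, with $k = F_{t+1}+1$ as its special case; your version is more explicit about why the algorithm's quotient is $\lfloor k/F_j \rfloor$ except when the remainder would be $1$.
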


\textbf{Proof}: We will proceed by induction. The statement clearly holds for $k = 2, \ldots, 14$, which accounts for the induction base. Now let $S^{(t)} = \{ 2,3,5,\ldots, F_t \}$, where $t \geq 6$, and $\displaystyle k \in [F_t + 2; F_{t+1} + 1]$. Our induction argument is based on the parameter $t$.  Note that $t \geq 6$ implies $k > 14$. 

 As the induction hypothesis we assume that the statement is valid for all $2 \leq j < t$. Let us now prove that it also holds for $t$. This induction scheme will be used several times throughout this section. It is similar to the scheme employed in other papers dealing with Zeckendorf-type representations, such as \cite{Hog72}, but we have attempted to make it more precise. 
 
 Since $\displaystyle k \in [F_t + 2; F_{t+1} + 1]$, the greedy  representation of $k$ involves $F_t$, with the exception of $k = F_{t+1}$, whose greedy representation is just $F_{t+1}$ itself, and this representation obviously satisfies Properties 1 and 2. Note that in the case $k = F_{t+1} + 1$, no representation of $k$ can contain $F_{t+1}$, so it must contain $F_t$. 
 


That leaves us with two cases: $k = F_{t+1}+1$ or $\displaystyle k \in [F_t + 2; F_{t+1} - 1]$. In the first case the largest \lq digit\rq \ of the greedy representation of $k$ is $F_t$, and the remainder is $F_{t+1}+1 -F_t = F_{t-1}+1$, so that the second largest digit will be $F_{t-2}$, which agrees with Properties 1 and 2. 

Now we may apply the induction hypothesis on $F_{t-1}+1$. This induction process may end up either in $6=3+3$ or $4=2+2$, which are the two exceptions foreseen by Property 2. 

In the second case $\displaystyle k \in [F_t + 2; F_{t+1} - 1]$. Again, the largest \lq digit\rq \ of the greedy representation of $k$ is $F_t$, and the remaining digits are determined from $k-F_t$. Now, $k - F_t \leq F_{t-1} - 1$, hence the second largest digit will be $\leq F_{t-2}$, in accordance with Properties 1 and 2. 

Now we may apply the induction hypothesis on $k-F_t$ to complete the proof. In this case the induction process may also end up in $6=3+3$ or $4=2+2$, but not necessarily so. 


\EndProof 


\begin{lemma}
    \label{lemma:colex} Let $k \geq 2$ be an integer. Then, the greedy representation of $k$ with respect to $\big\{ _2F_n \big\}_{n=0}^\infty$ is the largest representation of $k$ in  the colexicographic order. 
\end{lemma}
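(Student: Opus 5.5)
The plan is to prove the statement by a direct comparison of coefficient vectors, running from the highest index downwards. For a fixed prefix set $S = S^{(t)} = \{2,3,5,\ldots,F_t\}$ and a fixed $k \in \langle S \rangle$, let $\textbf{a} = (a_1,\ldots,a_t)$ be the greedy representation produced by Algorithm \ref{alg:quasigreedypayment2}. Let $\textbf{b} = (b_1,\ldots,b_t)$ be any other representation of $k$. Recall that $\textbf{a}$ is larger than $\textbf{b}$ in the colexicographic order if, at the largest index where they differ, $\textbf{a}$ has the larger entry. So it suffices to show the following: whenever $a_j = b_j$ for all $j > i$, we have $b_i \leq a_i$.

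The key structural fact I will use is this. For every $i \geq 2$, the prefix $\{2,3,\ldots,s_i\}$ generates the same semigroup $\N_0 \setminus \{1\}$ as the full set $S$, because it contains $2$ and $3$ and $F(\Semi) = 1$. Suppose $a_j = b_j$ for all $j > i$, and let $k_i = k - \sum_{j>i} a_j s_j$ be the common remainder at step $i$. The vector $\textbf{b}$ writes $k_i - b_i s_i$ as a combination of $s_1,\ldots,s_{i-1}$, so $k_i - b_i s_i \in \langle S \rangle$. The algorithm chooses $a_i$ as the \emph{largest} $q$ with $k_i - q s_i \in \langle S \rangle$. Hence $b_i \leq a_i$, and if $b_i < a_i$ we are done. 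If $b_i = a_i$, both vectors pass the same remainder $k_{i-1}$ to the next index, and we descend to $i-1$.

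The main obstacle is checking that the greedy run never gets stuck. The algorithm tests membership in the full semigroup $\langle S \rangle$, not in the semigroup generated by the generators still available. This only matters at the final stages, because only the last generator $s_1 = 2$ generates a strictly smaller semigroup, namely the even numbers. So I must check the step at $s_2 = 3$. There the algorithm takes the largest $q$ with $r = k_2 - 3q \neq 1$ and $r \geq 0$. By maximality of $q$, $r \in \{0,2\}$ when $k_2 \not\equiv 1 \pmod 3$, and $r = 4$ when $k_2 \equiv 1 \pmod 3$. In every case $r$ is even, so the final step with $s_1 = 2$ succeeds, and $a_1 = r/2$ is forced, as is $b_1$ once $b_2 = a_2$.

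With this checked, the descent argument shows that $\textbf{b} = \textbf{a}$ or $\textbf{b}$ is colexicographically smaller at the first index from the top where they differ. The result for the whole sequence $\big\{ _2F_n \big\}_{n=0}^\infty$ then follows by taking $t$ large enough that $F_t \geq k$; generators exceeding $k$ carry coefficient $0$ in every representation and do not affect the comparison.
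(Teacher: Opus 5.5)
Your proof is correct, and it is more complete than the paper's. The paper proves the lemma in one line, citing Pearson's observation that, with the digits read in reverse order, the greedy representation is the lexicographically largest one. That observation comes from the classical setting with a coin of value $1$, where the greedy step is plain floor division. Strictly speaking, then, it does not cover Algorithm~\ref{alg:quasigreedypayment2}, where $a_i$ is the largest $q$ with $k_i-qs_i\in\langle S\rangle$. Your descent argument proves the statement directly for this modified algorithm.

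Neither half of your argument actually uses the Fibonacci structure. The comparison step $b_i\le a_i$ holds for any $S$. The termination issue you call the main obstacle is also automatic. Suppose some representation of the remainder $k_{i-1}$ used a generator $s_j$ with $j\ge i$. Then $k_j-(a_j+1)s_j=(k_{i-1}-s_j)+\sum_{l=i}^{j-1}a_ls_l$ would lie in $\langle S\rangle$, contradicting the maximality of $a_j$. Hence $k_{i-1}\in\langle s_1,\ldots,s_{i-1}\rangle$ at every step, and the run never gets stuck.

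So your mod-$3$ check at $s_2=3$ is correct but not needed. Your argument in fact shows that the output of Algorithm~\ref{alg:quasigreedypayment2} is the colexicographically largest representation for every generating set, not only for the shifted Fibonacci sequence.
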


\textbf{Proof}: If we reverse the order of the digits, then the greedy representation is the largest representation with respect to the lexicographic order (see \cite{Pea05}).  

\EndProof 

Another result that has to do with the greedy representation is the following:
\begin{lemma}
    \label{lemma:sum1} Suppose that we have an integer $k$ represented in $\big\{ _2F_n \big\}_{n=0}^\infty$, such that the representation satisfies Properties 1 and 2 of Definition  \ref{def:properties}. Suppose also that the largest Fibonacci number that appears in the representation of $k$ is $F_t$. Then $k \leq F_{t+1} + 1$. 
\end{lemma}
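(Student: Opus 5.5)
We prove Lemma \ref{lemma:sum1} by strong induction on $t$, the index of the largest Fibonacci number in the representation. Throughout, the indexing is that of the paper, so $F_2 = 2$, $F_3 = 3$, $F_4 = 5$, and so on. The idea is the classical Zeckendorf bound $F_t + F_{t-2} + F_{t-4} + \cdots < F_{t+1}$. Property 1 forces the digits to be spaced at least two indices apart. Property 2 allows at most one extra copy of $2$ or of $3$, and that extra copy accounts for the ``$+1$'' slack.

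\textbf{Base cases.} For $t = 2$ the only representations allowed by Properties 1 and 2 are $2$ and $2+2 = 4$, and $4 \le F_3 + 1 = 4$. For $t = 3$ the admissible representations are $3$ and $3+3 = 6$; the number $3+2$ is excluded by Property 1. Since $6 \le F_4 + 1 = 6$, the bound holds. For $t = 4$ the largest digit is $5$, the next digit is at most $F_2 = 2$ (by Property 1), and it can occur at most twice. This gives at most $5+2+2 = 9 = F_5 + 1$. These checks also show the bound is tight.

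\textbf{Inductive step.} Let $t \ge 5$ and let $r(k)$ satisfy both properties with largest digit $F_t$. Since $t \ge 5$, Property 2 says $F_t$ appears exactly once. Write $k = F_t + k'$, where $k'$ is represented by the remaining digits. If $k' = 0$ the claim is trivial. Otherwise, by Property 1 the largest digit of $k'$ is some $F_s$ with $s \le t-2$. The representation of $k'$ is a subrepresentation of $r(k)$, so it still satisfies Properties 1 and 2. By the induction hypothesis,
\begin{displaymath}
k' \le F_{s+1} + 1 \le F_{t-1} + 1 .
\end{displaymath}
Hence $k \le F_t + F_{t-1} + 1 = F_{t+1} + 1$.

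\textbf{Main obstacle.} The delicate point is the bookkeeping at the bottom of the representation, where the exceptional doubled $2$ or $3$ lives. We must make sure the slack of $+1$ is used only once. The induction handles this automatically: the doubling can only occur among the smallest digits, and it is absorbed in the base cases $t \in \{2,3,4\}$. For $t \ge 5$ the leading digit is single, so the recursive step adds no further slack. We also need to check that the induction hypothesis is used only for indices $s \ge 2$, which holds because every digit is at least $F_2 = 2$.
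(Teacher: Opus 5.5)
Your proof is correct and is essentially the paper's argument: induction on $t$, where Property 2 forces $F_t$ to appear once, Property 1 pushes the next digit down to index at most $t-2$, and the induction hypothesis gives $k - F_t \le F_{t-1}+1$. The differences are cosmetic: you verify the base cases $t \le 4$ explicitly, whereas the paper only says the cases $t \le 6$ can be checked, and you state explicitly that the sub-representation inherits Properties 1 and 2, which the paper uses without comment.
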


\textbf{Proof}: We proceed by induction on $t$. As the base case we may verify the assertion for all $t \leq 6$. Now let $t > 6$, and suppose that the assertion holds for all $j < t$. Let $F_t$ be the largest Fibonacci number that appears in the representation of $k$, and suppose that the representation satisfies Properties 1 and 2 of Definition \ref{def:properties}. By Property 2, $F_t$ appears only once. By Property 1, the second largest Fibonacci number appearing in the representation of $k$ is $F_{t-2}$, or a smaller one. Then, by the induction hypothesis, $k - F_t \leq F_{t-1} + 1$, and the result follows. 



\EndProof 

Now we discuss the uniqueness of the representation:

\begin{lemma}
    \label{lemma:uniqueness} Let $r(k)$ be a representation of $k$ in $\big\{ _2F_n \big\}_{n=0}^\infty$, which satisfies Properties 1 and 2 of Definition \ref{def:properties}. Then $r(k) = \GREP_S(k)$. 
\end{lemma}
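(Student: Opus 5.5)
We argue by strong induction on $k$, comparing the largest digit of $r(k)$ with the largest digit of $\GREP_S(k)$. By Lemma \ref{lemma:properties}, $\GREP_S(k)$ also satisfies Properties 1 and 2, so both representations belong to the same class. It therefore suffices to show that a representation with Properties 1 and 2 is determined by $k$. The base cases, say $2 \leq k \leq 14$, are checked directly. This covers the exceptional representations $4=2+2$ and $6=3+3$, and confirms, for example, that $6$ has no other representation with both properties ($5+$ nothing does not work, and $2+2+2$ violates Property 2).

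For the inductive step, let $F_t$ be the largest digit of $r(k)$. By Property 2 it occurs once, except in the small cases already handled. By Property 1 the remaining digits are all $\leq F_{t-2}$, and they still satisfy Properties 1 and 2. Lemma \ref{lemma:sum1}, applied to the rest, bounds the remaining sum by $F_{t-1}+1$. Hence
\begin{displaymath}
F_t + 2 \leq k \leq F_t + F_{t-1} + 1 = F_{t+1}+1,
\end{displaymath}
except when $r(k)=F_t$ alone, in which case $k=F_t$. The lower bound holds because the nonempty remainder is at least $2$. The intervals $\{F_t\}\cup[F_t+2,\,F_{t+1}+1]$ for different $t$ overlap only at $F_{t+1}$ and $F_{t+1}+1$.

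The key step is to show that the largest digit $F_t$ is forced by $k$ alone. The main obstacle lies at the endpoints, because $k=F_{t+1}$ and $k=F_{t+1}+1$ sit in two consecutive ranges. For $k=F_{t+1}$ we must exclude $r(k)=F_t+(\text{rest})$ with rest summing to $F_{t-1}$ and digits $\leq F_{t-2}$. By the induction hypothesis the only Property-respecting representation of $F_{t-1}$ is $F_{t-1}$ itself, which is not allowed here, so this case cannot occur. For $k=F_{t+1}+1$, no representation can have largest digit $F_{t+1}$, since the remainder $1$ is not representable. A largest digit $F_{t+2}$ or more is also impossible, because $k < F_{t+2}+2$. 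So the top digit is $F_t$, exactly as in the greedy representation.

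In every case the largest digit of $r(k)$ therefore coincides with that of $\GREP_S(k)$, which the proof of Lemma \ref{lemma:properties} identifies, or alternatively which follows from Lemma \ref{lemma:colex}. We then apply the induction hypothesis to $k-F_t < k$. The remaining digits of $r(k)$ form a Properties 1–2 representation of $k-F_t$, so they equal $\GREP_S(k-F_t)$, and the greedy algorithm on $k$ proceeds exactly to $k-F_t$. One detail needs care: the greedy algorithm must not leave a remainder of $1$. This is guaranteed because the greedy digit is $F_t$ precisely when $k-F_t\neq 1$, matching the exclusion above.
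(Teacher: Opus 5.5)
Your argument is correct and is essentially the paper's. Lemma~\ref{lemma:sum1} pins the top digit of a Properties~1--2 representation to $F_t$ on $[F_t+2,\,F_{t+1}+1]$, the endpoints $F_{t+1}$ and $F_{t+1}+1$ are handled separately, and the induction is applied to $k-F_t$; inducting on $k$ rather than $t$, and reducing to uniqueness via Lemma~\ref{lemma:properties}, is only a cosmetic difference. For $k=F_{t+1}$, you explicitly rule out a representation $F_t+(\text{digits }\leq F_{t-2}\text{ summing to }F_{t-1})$ using the induction hypothesis on $F_{t-1}$, which supplies a step the paper only asserts.
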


\textbf{Proof}: Again we will proceed by induction. The statement can be easily verified by hand for $k = 2, \ldots, 6$. That accounts for the induction base. Now let $t \geq 4$, $S^{(t)} = \{ 2,3,5,\ldots, F_t \}$ and $\displaystyle k \in [F_t + 2; F_{t+1} + 1]$. 

We use induction on $t$. As the induction hypothesis we assume that the statement is valid for all $2 \leq j < t$. Let us now prove that it also holds for $t$. Since $\displaystyle k \in [F_t + 2; F_{t+1} + 1]$, there exists a representation of $k$ that involves $F_t$. In fact, by Lemma \ref{lemma:sum1} any representation $r(k)$ that satisfies Properties 1 and 2 \emph{must} involve $F_t$, except for $k = F_{t+1}$. Indeed, if $F_{t-1}$ is the largest \lq digit\rq \ in $r(k)$, then $k \leq F_t + 1$. 

In the case $k = F_{t+1}$, this number can be represented by itself, and this representation obviously satisfies Properties 1 and 2. Note that in the case $k = F_{t+1} + 1$, no representation of $k$ can contain $F_{t+1}$, so it must contain $F_t$. 

Thus, let $\displaystyle k \in [F_t + 2; F_{t+1} + 1]$, $k \neq F_{t+1}$. Then $r(k)$ contains $F_t$, so $r(k)$ agrees with $\GREP_S(k)$ in the most significant digit. The remaining digits are determined from $k-F_t$. Now, $k - F_t \leq F_{t-1} + 1$, hence we may apply the induction hypothesis with $j = t-2$, and the result follows. 

\EndProof 

In other words, $\GREP_S(k)$ is the unique representation of $k$ in terms of $\big\{ _2F_n \big\}_{n=0}^\infty$ that satisfies Properties 1 and 2. Now we show that $\GREP_S(k)$ is optimal: 

\begin{lemma} 
    \label{lemma:summands} Let $k \geq 2$ be an integer, which is to be represented in $\big\{ _2F_n \big\}_{n=0}^\infty$. Then, $\GREP_S(k)$ minimizes the number of summands. 
\end{lemma}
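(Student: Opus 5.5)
The plan is to show that a minimal representation can always be transformed, without increasing the number of summands, into one satisfying Properties 1 and 2. By Lemma \ref{lemma:uniqueness} the result must then be $\GREP_S(k)$, so $\GRC_S(k) \leq \MC_S(k)$, and equality follows because $\MC$ is a minimum.

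Fix $k\geq 2$ and a representation $r(k)$ with the minimal number of summands. Among all such minimal representations, choose one that is largest in the colexicographic order. This choice is the same device behind Lemma \ref{lemma:colex}, and it guarantees that any local rewriting which keeps the count and raises some digit contradicts the choice. The local rewritings I would use, each with $F_{j-1}+F_j=F_{j+1}$ and doubling identities, are as follows. First, if $F_j$ and $F_{j+1}$ both appear, replace them by $F_{j+2}$. This strictly decreases the number of summands, which contradicts minimality, so Property 1 holds. Second, if $F_j$ appears twice with $j\geq 4$, use $2F_j = F_{j+1}+F_{j-2}$, valid since $F_{j-2}\geq 2$ when $j\geq 4$. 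This keeps the count and raises the top digit, contradicting colex-maximality.

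The remaining cases concern the small digits $2$ and $3$. Three copies of $2$ become $3+3$, which lowers the count. Three copies of $3$ become $2+2+5$, which keeps the count but introduces $5$, a larger digit, so colex-maximality is contradicted. Occurrences $2+2$ and $3+3$ together give $10=5+5$, with count $4\to 2$. Finally, $5+5=8+2$ is the $j=4$ instance of the general identity. One must check that Property 2 permits only one doubled small digit and no tripled digits, and the cases above cover exactly these.

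I expect the main obstacle to be that a rewriting can create a new adjacency or duplicate. For example, $2F_j\to F_{j+1}+F_{j-2}$ may produce a second $F_{j+1}$ or place $F_{j-2}$ next to an existing $F_{j-1}$ or $F_{j-3}$. This is why I take the extremal (colex-maximal) minimal representation instead of iterating rewrites: each forbidden pattern is handled by a single rewrite whose output is either shorter or colex-larger, whatever it creates elsewhere, so no termination argument is needed. The one thing to verify carefully is that every violation of Properties 1 and 2 has a rewrite of that kind. Once this is done, the extremal minimal representation satisfies both properties, and Lemma \ref{lemma:uniqueness} identifies it with $\GREP_S(k)$, completing the proof.
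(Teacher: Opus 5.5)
Your argument is correct and uses the paper's ingredients: the same four rewriting rules (merging $F_j+F_{j+1}$, splitting $2F_j=F_{j+1}+F_{j-2}$ for $j\geq 4$, $2+2+2\to 3+3$, $3+3+3\to 5+2+2$), the colexicographic order as the monotone quantity, and Lemma~\ref{lemma:uniqueness} to identify the final representation with $\GREP_S(k)$. The difference is only in packaging: the paper applies the rules repeatedly to an arbitrary representation and gets termination because each rule raises the colex order, whereas you take a colex-maximal representation among the minimal ones and show that any applicable rule leads to a contradiction. Your separate case $2+2+3+3=5+5$ is unnecessary, since $2$ and $3$ are consecutive and Property~1 already excludes it.
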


\textbf{Proof}: Let $r(k)$ be any representation of $k$ in $\big\{ _2F_n \big\}_{n=0}^\infty$ (not necessarily satisfying Properties 1 and 2). We may repeatedly apply the following \emph{rewriting} or \emph{reduction rules} on $r(k)$: 
\begin{enumerate} 
    \item \textbf{Eliminate consecutive Fibonacci summands}: If $r(k)$ contains a pair of consecutive Fibonacci summands, $F_j$ and $F_{j+1}$, for any $j \geq 2$, then replace the pair by $F_{j+2}$. This operation reduces the number of summands by one. 
    \item \textbf{Eliminate repetitions of a Fibonacci number}: If $r(k)$ contains two (or more) occurrences of a Fibonacci number $F_j$, for any $j \geq 4$, then replace $2F_j$ by $F_{j+1} + F_{j-2}$. This operation does not reduce the number of summands, but it creates opportunities for applying the first rule.  
    \item \textbf{Eliminate trio of $2$'s}: If $r(k)$ contains three (or more) occurrences of $2$, then replace a trio of $2$'s by two occurrences of $3$. 
    \item \textbf{Eliminate trio of $3$'s}: If $r(k)$ contains three (or more) occurrences of $3$, then replace a trio of $3$'s by the trio $5+2+2$. 
\end{enumerate}

Note that the application of any of these four rules yields a larger representation of $k$ in colexicographic order, and since the number of representations of $k$ is finite, the procedure terminates (i.e. it is an algorithm). When the procedure terminates the final representation $r_0(k)$ so obtained satisfies Properties 1 and 2 above, and it has at most the same number of summands as the initial representation $r(k)$. By Lemma \ref{lemma:uniqueness}, $r_0(k) = \GREP_S(k)$. 

\EndProof 
\vspace{2mm} 

So far we have investigated representations of integers in terms of the full shifted sequence $\big\{ _2F_n \big\}_{n=0}^\infty$, and we can see that the greedy representation in terms of this sequence is optimal in the number of summands. This result is similar to Zeckendorf's Theorem, and our proof follows the traditional approaches for proving Zeckendorf's Theorem. Apparently, Theorem \ref{theo:Fibonacci2} is stronger than that: It states that \emph{any} prefix subset of $\big\{ _2F_n \big\}_{n=0}^\infty$ is greedy. However, we can show that the previous results actually imply Theorem \ref{theo:Fibonacci2}. 
\\\\ 
\textbf{Proof of Theorem \ref{theo:Fibonacci2}}: Again, let $S^{(t)} = \{ 2,3,5,\ldots, F_t \}$ be a prefix subset of the shifted Fibonacci sequence $\big\{ _2F_n \big\}_{n=0}^\infty$, with $t \geq 2$, and let $\Semi^{(t)} = \langle S^{(t)} \rangle$ be the numerical semigroup generated by $S^{(t)}$. We are going to show that $\Semi^{(t)}$ is greedy for all $t$ by induction on $t$. It is clear that $\Semi^{(2)}$ and $\Semi^{(3)}$ are greedy (see \cite{PR25}), and we may easily verify that $\Semi^{(4)}$ is also greedy with the aid of Theorem \ref{theo:three-gens1}. That establishes the induction base. 

Now let $t \geq 5$, and let us assume that $\Semi^{(j)}$ is greedy for all $2 \leq j < t$. In order to show that $\Semi^{(t)}$ is also greedy we have to show that $\mbox{\GRC}_S(k) = \mbox{\MC}_S(k)$ for all $k$ in the critical range $\displaystyle \big[ 9; \ F_{t+1} + 1 \big]$. However, we may restrict ourselves to those integers $k$ that have a representation involving $F_t$ (excluding $F_t$ itself, whose representation is trivial), since smaller values of $k$ are dealt with by the induction hypothesis. Hence we may restrict ourselves to integers $\displaystyle k \in \big[ F_t + 2; \ F_{t+1} + 1 \big]$. Now, for integers in the range $\displaystyle \big[ F_t + 2; \ F_{t+1} + 1 \big]$ we may apply Lemma \ref{lemma:summands}, and the result follows. 

\EndProof  
\vspace{2mm}  

Now that we know that $S^{(t)}$ is greedy for any $t \geq 2$, let us examine the form of $\GREP_S(k)$ with respect to $S^{(t)}$. Its properties are quite similar to Properties 1 and 2, with some minor changes: 

\begin{itemize}
    \item \textbf{Property 1b}: It does not contain two consecutive Fibonacci numbers, $F_j$ and $F_{j+1}$, with the possible exception of $F_{t-1}$ and $F_t$, and 
    \item \textbf{Property 2b}: It does not contain two occurrences of a Fibonacci number $F_j$, with the possible exceptions of $F_2 = 2$ and $F_3 = 3$, which may appear twice (but not at the same time), and $F_t$, which may appear an arbitrary number of times. 
\end{itemize} 
Similarly, the rewriting procedure in the proof of Lemma \ref{lemma:summands} can also be adapted to the case of $S^{(t)}$. We start with \emph{any} representation $r(k)$ of $k$ in terms of $S^{(t)}$, and then apply the (modified) set of rules: 

\begin{enumerate} 
    \item \textbf{Eliminate consecutive Fibonacci summands}: If $r(k)$ contains a pair of consecutive Fibonacci summands, $F_j$ and $F_{j+1}$, for any $2 \leq j \leq t-2$, then replace the pair by $F_{j+2}$. 
    \item \textbf{Eliminate repetitions of a Fibonacci number}: If $r(k)$ contains two (or more) occurrences of a Fibonacci number $F_j$, for any $4 \leq j \leq t-1$, then replace $2F_j$ by $F_{j+1} + F_{j-2}$. 
    \item \textbf{Eliminate trio of $2$'s}: If $r(k)$ contains three (or more) occurrences of $2$, then replace a trio of $2$'s by two occurrences of $3$. 
    \item \textbf{Eliminate trio of $3$'s}: If $r(k)$ contains three (or more) occurrences of $3$, then replace a trio of $3$'s by the trio $5+2+2$. 
\end{enumerate}



Unfortunately, Theorem \ref{theo:Fibonacci2} cannot be generalized in a straightforward manner to arbitrary shifts $r$. Take for instance $r=3$, and take the truncated set $S = \{ 3, 5, 8 \}$. It is easy to verify that $k=20$ is a counterexample for $S$, since 
\begin{align*}
    \GRC_S(20) &= 5, \; \GREP_S(20) = 8+3+3+3+3, \; \mbox{ whereas} \\
    \MC_S(20) &= 4, \; \MREP_S(20) = 5+5+5+5. 
\end{align*}

Consequently, $S$ is not greedy, and the shifted sequence 
\begin{displaymath}
    \big\{ _3F_n \big\}_{n=0}^\infty = \{ 3, 5, 8, 13, 21, \ldots \} 
\end{displaymath}
is \emph{not} totally greedy. 


\section{Some open problems}
\label{sec:open}

In this section we collect several interesting open questions that have arisen throughout our research. 

In Section \ref{sec:threegens} we expand our catalog of greedy numerical semigroups with embedding dimension three, which was started in \cite{PR25}. In particular, Theorem \ref{theo:enlarged-with-Frobenius2} allows us to augment a numerical semigroup of embedding dimension two into a greedy one with embedding dimension three, by adding the Frobenius number. 

Computational experiments suggest that Theorem \ref{theo:enlarged-with-Frobenius2} can be extended to all $a \geq 2$. However, the assumption $a \geq 4$ plays a significant role in the proof. It would be interesting to find another proof of Theorem \ref{theo:enlarged-with-Frobenius2} that doesn't make use of this assumption, and thus extend the theorem to all $a \geq 2$, if possible. 

We are also interested in the generalization of Theorem \ref{theo:enlarged-with-Frobenius2} to higher embedding dimensions. Is it always true that if $\Semi = \langle S \rangle$ is greedy, then $\langle S \cup F(\Semi) \rangle$ is also greedy?

In Section \ref{sec:totallygreedy} we have proved that the Fibonacci number sequence shifted two places to the left is totally greedy (Theorem \ref{theo:Fibonacci2}). The proof follows an ad hoc inductive argument that involves several tiny details and special cases. Generalizing this argument to other sequences or families of sequences might be tedious. Here we need more powerful tools, such as the so-called \emph{One-point Theorem}, which applies to sequences starting with $1$ (see \cite{Ada10,PR24a,PR25-Istambul}). 

Additionally, we saw that if the same Fibonacci sequence is shifted \emph{three} places to the left, then it loses the greedy property. A natural question is: Given a totally greedy sequence having the first term equal to $1$, which shifts preserve the greedy property?  

\section*{Acknowledgements}

This work has been partially supported by the Spanish Ministry of Science and Innovation MCIN$/$AEI 10.13039/501100011033 (contract MATSE PID2024-156636NB-C22). 



\end{document}